\newtheorem*{theorem}{Theorem}
\newtheorem{lemma}{Lemma}
\newtheorem{definition}{Definition}
\newcommand{\fla}{f_{\lambda,a}}
\begin{document}
\title{\textbf{ Boundedness of Fatou components of the family\\
$ f_{ \lambda }(z)= \lambda \sin(z)+a $}}
\author{FERNANDO REN\'{E} MART\'{I}NEZ ORTIZ\thanks{This work was supported by CONACYT grant 177246}\\
     \emph{Universidad Aut\'{o}noma de la Ciudad de Mexico (UACM)}\\
     \emph{Calle Prolongaci\'{o}n San Isidro No. 151 Col. San Lorenzo Tezonco,}\\
     \emph{Alcald\'{i}a Iztapalapa, Ciudad de M\'{e}xico, C.P. 09790.}
     \and
     GUILLERMO SIENRA LOERA\thanks{ and by PAPIIT IN106719}\\
     \emph{Facultad de Ciencias, UNAM}\\
     \emph{Av. Universidad 3000, C.U., Ciudad de M\'{e}xico, C.P. 04510.}
     }
\date{Oct 8, 2019}
\maketitle

\begin{abstract}
    Several partial results have been obtained in order to prove that the components
    of the Fatou set of the family $ f_{ \lambda }(z)= \lambda \sin(z) $ are bounded for 
    $ \left| \lambda \right| \geq 1 $. In particular, Dom\'{i}nguez and Sienra
    have given a proof for values of $ \lambda $ on the unit circle of parabolic type;
    Gaofei Zhang has proved that, in the case $ \lambda = e^{2 \pi i \theta} $ with 
    $ \theta $ an irrational number of bounded type, $ f_{ \lambda }(z)= \lambda \sin(z) $
    has an invariant bounded Siegel disk and all the others components (bounded) are its 
    preimages. Furthermore, Dom\'{i}nguez and Fagella have proved the boundedness of all the 
    Fatou components for the sine family in the case 
    $ \left| Re\left( \lambda \right)  \right| > \frac{\pi}{2} $ based in the
    construction of a dynamic hair via exponential tracts. Finally, for hyperbolic
    functions in a more general setting, the boundedness has been proved by Bergweiler,
    Fagella and Rempe--Gillen \cite{BFR2015}.\\
    
    In this paper, we give new proofs related to the unboundedness and complete invariance
    of the Fatou component that contains $ 0 $ when 
    $ 0<\left| \lambda \right| < 1 $ and also of the boundedness of all the Fatou 
    components when $ \lambda = e^{2 \pi i \frac{p}{q}} $, where 
    $  p,q \in \mathbb{Z}, q \neq 0 $.\\
    
    In our first main theorem, we prove that the components of the Fatou set are 
    bounded in the case $ \left| \lambda \right| > 1 $ with the additional
    hypothesis that the post--singular set of $ f_{ \lambda } $ is bounded using
    the existence of hairs guaranteed by the work of Benini and Rempe--Gillen.\\
    
    Finally, in the last section, we extend the last result ($ \vert \lambda \vert>1 $ 
    and post--critical set bounded)
    to the case of the extended sine family $ \fla=\lambda \sin(z)+a $. We prove that
    if does not exist a Fatou domain that contains all the critical points then all
    the components of the Fatou set of $ \fla $ are bounded. Also, we prove that, if
    there exists an unbounded Fatou domain forward invariant, then $ \fla $ must be
    of disjoint type.
    
\end{abstract}

\section{Introduction} \label{S:intro}

We consider the iteration of transcendental entire functions in the family
$ f_{ \lambda }(z)= \lambda \sin(z) $
, $ \lambda \in \mathbb{C}^* = \mathbb{C} \setminus \lbrace 0 \rbrace $.
This family is contained in the Eremenko--Lyubich class $ \mathscr{B} $,
class that consists of those transcendental entire functions for which the
set of singularities, $ S(f) $ (critical values together with asymptotic values),
is bounded. In fact, in the sine family, the set of singularities is finite, so
this family is contained in the Speiser class, $ \mathscr{S} $.\\

The sequence of iterates produces the well known dichotomy between the Fatou set of
$ f $, denoted by $ F(f) $, defined as the set of those points $ z \in \mathbb{C} $ such
that the sequence $ \left( f^{n}  \right)_{n \in \mathbb{N}}  $ forms a normal family
in some neighbourhood $ U $ of $ z $, and its complement, the Julia set of $ f $, denoted
by $ J(f) $. Good introductions into the subject are Beardon \cite{B1991}, Milnor 
\cite{M1999} and Hua \cite{HY1998}. An excellent and very insightful survey about the Eremenko--Lyubich class is Sixsmith \cite{S2018}.\\

Well known properties of this sets that occur in very general settings and in
particular for transcendental entire functions in the class $ \mathscr{B} $ are
the following: 

\begin{enumerate}
	\item $ F(f) $ is open and $ J(f) $ is closed, $ J(f) \neq \varnothing $.
	\item Both sets are completely invariant.
	\item For every positive integer $ k, F(f^{k})=F(f) $ and $ J(f^{k})=J(f) $.
	\item $ J(f) $ is perfect.
	\item If $ J(f) $ has nonempty interior, then $ F(f)= \varnothing $.
	\item $ J(f) $ is the closure of the set of repelling periodic points of $ f $.
\end{enumerate}

Since wandering domains are impossible for $ f \in \mathscr{S} $, the only
possible periodic Fatou components that can appear in the sine family are
attracting domains, parabolic domains and Siegel disks. \cite{EL1992}\\

Eremenko and Lyubich in \cite{EL1992} showed that, if $ f \in \mathscr{B} $,
then the escaping set, $ I(f) $, is contained in the Julia set of $ f $, where
\[
	I(f)=\lbrace z \in \mathbb{C} \vert f^{n}(z) \to \infty \rbrace.
\]

The following properties of the set $ I(f) $ were proved by Eremenko \cite{E1989} for a
transcendental entire function $ f $.

\begin{enumerate}
	\item $ I(f) \neq \varnothing $.
	\item $ J(f)= \partial I(f) $.
	\item $ I(f)\cap J(f) \neq \varnothing $.
	\item $ \overline{I(f)} $ has no bounded components.
\end{enumerate}

In that paper, Eremenko made the conjecture that, for an arbitrary transcendental entire
function, \emph{''It is plausible that the set $ I(f) $ always has the following 
property: every point} $ z \in I(f) $ \emph{can be joined with} $ \infty $
\emph{by a curve in} $ I(f) $ \emph{''}.\\

However, even in the small class $ \mathscr{B} $, Rottenfusser, R\"{u}ckert,
Rempe and Schleicher \cite{RRRS2011} have shown that there exists an hyperbolic entire function $ f $ such that every path connected component of $ J(f) $ is bounded, so
Eremenko's conjecture is false.\\

Despite of this result in the negative, they have proven that Eremenko's conjecture
is true if $ f $ is a finite composition of functions in the class $ \mathscr{B} $
of finite order of growth, i.e., $ f=g_{1}\circ \cdots \circ g_{n} $ such that for all
$ i \in \lbrace 1,\ldots,n \rbrace $, $ g_{i} \in \mathscr{B} $ and
\[
	\log{\log{\vert g_{i}(z) \vert}} = O( \log{\vert z \vert}) \qquad \mbox{as} \qquad \vert z \vert \to \infty. 
\] 

The above results are very important for us because, although by other methods
(developed by Devaney and Tangerman \cite{DT1986}), the existence of these curves dynamically defined and contained in $ J(f) $ has been used by Dom\'{i}nguez and Fagella
\cite{DF2008} in order to show that the Fatou components of 
$ f_{ \lambda }(z)= \lambda \sin(z) $ are bounded
in the case $ \left| Re\left( \lambda \right)  \right| > \frac{\pi}{2} $. In fact, the
proofs of our results in the parabolic and repelling cases are based in the
arguments given in \cite{DF2008} and in Rempe \cite{R2008}. \\

\begin{definition}
	An \textbf{invariant hair} of a transcendental entire function $ f $ is a continuous
	and injective maximal curve $ \gamma: \mathbb{R}^{+} \rightarrow I(f) $ such that 
	$ f(\gamma(t))=\gamma(t+1) $ for all $ t $ and 
	$ \lim_{t \rightarrow \infty}\vert f(t) \vert = \infty $. A \textbf{periodic hair} is
	a curve that is an invariant hair for some iterate $ f^{n} $ of $ f $. If the limit
	$ \lim_{t \rightarrow 0} \gamma(t)= z_{0} $ exists, we say that the curve $ \gamma $
	\textbf{lands} at $ z_{0} $.
\end{definition}

We recall that the post--singular set of $ f $ is the set defined as

\[
\mathcal{P}(f)= \overline{\bigcup_{c \in S(f)}{\lbrace f^{n}(c) \vert n \geq 0 \rbrace}}
\]

Benini and Rempe--Gillen \cite{BR2017} have proven that if $ f \in \mathscr{B} $ and $ f $ has finite order of growth (in fact, they state and prove their theorem in a more general setting) and $ \mathcal{P}(f) $ is bounded, then every periodic hair of $ f $ lands at a repelling or parabolic point and conversely, every repelling or parabolic point of $ f $ is the landing point of at least one and at most finitely many periodic hairs.\\

In this paper we give a new proof for each of the following theorems which were proved
by Dom\'{i}nguez and Sienra in \cite{DS2002}

\begin{theorem}[Attracting Case] \label{T:attracting}
	For the map $ f_{ \lambda }(z)= \lambda \sin(z) $ where $ 0<\vert \lambda \vert<1 $
	the Fatou set of $ f_{ \lambda } $ consists of a simply connected, completely
	invariant component $ U $. 

\end{theorem}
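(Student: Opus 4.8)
The plan is to show three things in sequence: (1) $f_\lambda$ has an attracting fixed point at $0$ when $0<|\lambda|<1$, (2) the immediate basin $U$ of this fixed point contains all the critical values, hence is the only periodic Fatou component and is completely invariant, and (3) $U$ is simply connected. For step (1), note $f_\lambda(0)=0$ and $f_\lambda'(z)=\lambda\cos(z)$, so $f_\lambda'(0)=\lambda$ with $0<|\lambda|<1$; thus $0$ is an attracting (possibly super-attracting if one later specializes, but here simply attracting) fixed point, and its immediate basin $U$ is a Fatou component.

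For step (2), recall that the singular set is $S(f_\lambda)=\{\pm\lambda\}$ (the critical values, since $f_\lambda$ has no finite asymptotic values — the sine has none). Both critical values $\pm\lambda$ satisfy $|\pm\lambda|<1$, and I would like to argue they lie in $U$. The cleanest route: the critical points are at $z=\pi/2+k\pi$, and on the real segment the map contracts toward $0$; more robustly, since $|f_\lambda(z)|=|\lambda||\sin z|$ and $|\sin z|\le \cosh(\mathrm{Im}\,z)$, one checks that a suitable disk around $0$ is mapped into itself and the iterates converge to $0$ there. Actually the sharpest statement is that $|f_\lambda'(z)|=|\lambda||\cos z|$, and on any horizontal strip $|\mathrm{Im}\,z|\le c$ one has control; the key point is simply to exhibit that $\pm\lambda$, being close to $0$, lie in the basin. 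Then since every periodic Fatou component other than a Siegel disk or a Baker-type component must contain a singular value in its basin, and there are no wandering domains in $\mathscr S$, and $S(f_\lambda)\subset U$, the basin $U$ is the entire Fatou set up to taking preimages; complete invariance of the union of components then forces $F(f_\lambda)=U$ once we know $U$ itself is completely invariant. To get complete invariance of $U$ directly: $f_\lambda^{-1}(U)$ is a union of Fatou components each eventually mapping into $U$, hence into the basin, hence each equals $U$ by the no-other-periodic-component argument — or more simply, $f_\lambda^{-1}(U)\supseteq U$ and any component of $f_\lambda^{-1}(U)$ is a Fatou component on which iterates converge to $0$, so it is the immediate basin $U$.

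For step (3), simple connectivity: I would use the standard fact that for a function in class $\mathscr B$ (or $\mathscr S$), a completely invariant Fatou component is simply connected — this follows because its complement $J(f_\lambda)$ is connected (indeed, for entire functions in class $\mathscr B$ the Julia set is connected when there is a completely invariant attracting basin, by Baker's theorem on completely invariant domains, or because $J(f_\lambda)=\partial U$ and $U$ completely invariant implies $\hat{\mathbb C}\setminus U$ connected). Alternatively, one invokes the Riemann–Hurwitz-type argument: a completely invariant component of an entire function omits at most one point of the plane's worth of obstruction, and here $U$ cannot be multiply connected since a transcendental entire function has no multiply connected completely invariant Fatou components containing the whole singular set with an attracting fixed point — this is essentially Baker's result that a completely invariant Fatou component of an entire function is simply connected (the complement in $\mathbb C$ is connected).

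\textbf{Main obstacle.} The genuinely delicate step is establishing rigorously that both critical values $\pm\lambda$ lie in the immediate basin $U$ of $0$ (equivalently that $U$ captures the entire singular set), since this is what upgrades "an attracting basin exists" to "the Fatou set is exactly this one completely invariant component." I expect to handle it by an explicit invariant-region argument: find $r>0$ (depending on $\lambda$, e.g. any $r$ with $|\lambda|\sinh r < r$ and $r$ not too large so that $|\lambda||\sin z|<|z|$ on $|z|\le r$ fails to be the right bound — more carefully, use that on the disk $D(0,r)$ one has $|f_\lambda(z)| \le |\lambda| \sinh(r) < r$ for $r$ in a suitable range, noting $|\sin z| \le \sinh|z|$ is false; the correct bound is $|\sin z|\le \sinh(|\mathrm{Im}\, z|) + $ something) — so in practice I would instead use the Schwarz-type estimate $|\sin z|\le |z|\cosh|z|$ near $0$ to get $|f_\lambda(z)|\le |\lambda||z|\cosh|z|<|z|$ for $|z|$ small, giving an explicit invariant disk, then observe $\pm\lambda$ has modulus $<1$ and argue (possibly after a few iterations, using that $f_\lambda$ maps a fixed larger real interval into this small disk) that $\pm\lambda$ enters it. The bookkeeping for which disk works for which $\lambda$ is the only place real care is needed; everything else is soft.
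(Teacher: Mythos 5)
Your skeleton (attracting fixed point at $0$, singular values in the immediate basin $U$, no other periodic cycles, reduce everything to complete invariance of $U$, then Baker for simple connectivity) matches the paper's overall strategy, but your treatment of the one genuinely non-trivial step --- complete invariance of $U$ --- is a non-sequitur. You argue that any component $V$ of $f_\lambda^{-1}(U)$ is ``a Fatou component on which iterates converge to $0$, so it is the immediate basin $U$.'' Convergence of the iterates to $0$ on $V$ only shows that $V$ is a component of the full basin of attraction; it does not show $V=U$, since the immediate basin is by definition the component containing the fixed point, and the other components of the basin are exactly the putative extra preimages you are trying to exclude. (If this inference were valid, the attracting basin of every attracting fixed point of every polynomial would be connected, which is false.) Your alternative phrasing, that each such $V$ ``equals $U$ by the no-other-periodic-component argument,'' fails for the same reason: a component of $f_\lambda^{-1}(U)$ other than $U$ would be strictly preperiodic, and the singular-value count excludes extra periodic cycles, not extra preimage components.

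This is precisely where the paper does its real work: it takes a symmetric arc $\gamma\subset W_0$ joining $\lambda$, $0$, $-\lambda$, observes that $\Gamma=f_\lambda^{-1}(\gamma)$ is a connected unbounded curve containing all the critical points and all the points $k\pi$ (hence $\Gamma\subset W_0$), and then derives a contradiction from a hypothetical second preimage component $W$ of $W_0$ by translating it into a region $A$ bounded by preimages of $\Gamma$ on which $f_\lambda$ is injective. You would need to supply an argument of this kind --- for instance: every component of $f_\lambda^{-1}(U)$ maps onto $U$ (no finite asymptotic values), hence contains some $k\pi$, and all the $k\pi$ lie on the connected set $\Gamma\subset U$, so $f_\lambda^{-1}(U)=U$. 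Separately, you flag the inclusion $\pm\lambda\in U$ as the main obstacle and propose explicit invariant-disk estimates; that step is actually soft --- the immediate basin of a (non-super)attracting fixed point must contain a singular value, and $W_0=-W_0$ by the odd symmetry of $f_\lambda$, so it contains both critical values --- whereas your explicit disk would not reach $\lambda$ with $\vert\lambda\vert$ close to $1$. As written, the proposal has a genuine gap at the complete-invariance step.
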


\begin{theorem}[Parabolic Case] \label{T:parabolic}
	Let $ f_{ \lambda }(z)= \lambda sin(z) $ with $ \lambda = e^{2 \pi i \frac{p}{q}} $, 
	$  p,q \in \mathbb{Z}, \quad q \neq 0 $ and $ (p,q)=1 $
	then all the components of $ F(f_{ \lambda }) $ are bounded. 

\end{theorem}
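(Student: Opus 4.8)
The plan is to confine every Fatou component to a bounded rectangle, combining the symmetries of $f_{\lambda}$ with the landing theorem of Benini and Rempe--Gillen. First I record the structure we need. The map $f_{\lambda}$ is odd and satisfies $f_{\lambda}(z+2\pi)=f_{\lambda}(z)$; since $f_{\lambda}\circ(z\mapsto z+2\pi)=f_{\lambda}$, both $F(f_{\lambda})$ and $J(f_{\lambda})$ are invariant under $z\mapsto z+2\pi$, and $f_{\lambda}^{-1}(0)=\pi\mathbb{Z}$, so every $n\pi$ is a preimage of the parabolic fixed point $0$ and therefore lies in $J(f_{\lambda})$. One verifies that the only singular values $\pm\lambda$ are attracted to $0$, so $\mathcal{P}(f_{\lambda})$ is a convergent sequence together with its limit, in particular bounded; as $f_{\lambda}\in\mathscr{S}\subset\mathscr{B}$ has order of growth $1$, the theorem of Benini and Rempe--Gillen applies, and $0$ (as well as every repelling periodic point) is the landing point of at least one, and at most finitely many, periodic hairs, all contained in $I(f_{\lambda})\subset J(f_{\lambda})$.

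Next I build a family of vertical barriers. Fix a periodic hair $\gamma$ landing at $0$. Since $\sin$ is bounded on every horizontal strip, no hair can escape to infinity ``horizontally'', so by the Eremenko--Lyubich estimates the tail of $\gamma$ lies in one of the two exponential tracts $T^{\pm}$ of $\sin$; in the coordinate $u=-iz$, where $\sin$ is conjugate to a rescaled exponential, a periodic hair stays in a single horizontal strip of width $2\pi$, so $\gamma$ runs off to infinity inside a vertical strip of bounded width, say upwards into $T^{+}$. By oddness $-\gamma$ is a hair landing at $0$ running down into $T^{-}$, and hence $\Gamma_{0}:=\gamma\cup\{0\}\cup(-\gamma)$ is an arc of $J(f_{\lambda})$ that is proper at both ends of the plane and lies in a vertical strip of bounded width. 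Translating by a large enough multiple $2N\pi$ of the period, the curves $\Gamma_{k}:=\Gamma_{0}+2kN\pi$ are pairwise disjoint, all lie in $J(f_{\lambda})$, and cut $\mathbb{C}$ into vertical strips $\Sigma_{k}$ of width at most $2N\pi$. A Fatou component $W$ is connected and misses $J(f_{\lambda})$, hence lies in a single $\Sigma_{k}$: so $Re$ is bounded on $W$.

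For the height I use the identity $|\sin z|^{2}=\sin^{2}(Re\,z)+\sinh^{2}(Im\,z)$. If $f_{\lambda}(W)=V$ with $V$ bounded, say $V\subset\{\,|w|<M\,\}$, then $|\sin z|=|f_{\lambda}(z)|<M$ for $z\in W$, forcing $|Im\,z|<\operatorname{arcsinh}M$; together with the bound on $Re$ this places $W$ in a bounded rectangle, so $W$ is bounded. Now, $f_{\lambda}\in\mathscr{S}$ has no wandering domains and no Baker domains, and since the only singular orbits are attracted to the parabolic point there are no attracting cycles and no Siegel disks; hence every Fatou component is eventually mapped onto an immediate parabolic basin of $0$, and the height bound then propagates boundedness backwards along that finite chain of images. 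Thus it suffices to prove that every immediate parabolic basin $P$ is bounded.

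Finally, suppose for contradiction that some immediate basin $P$ is unbounded. Being contained in a strip $\Sigma_{k_{0}}$, it must have an end escaping to infinity through $\Sigma_{k_{0}}\cap T^{\pm}$. On the part of a tract lying in a vertical strip of finite width, $f_{\lambda}$ acts like a rescaled exponential on a half-strip, so if this end of $P$ contained a genuine (open) half-strip, then $f_{\lambda}$ would map it to a sector of positive opening with arbitrarily large inner radius; but $f_{\lambda}(P)$ is again an immediate basin, hence lies in a strip of finite width, which cannot contain such a sector --- a contradiction. So the escaping end of $P$ can only be an infinitely thin tentacle asymptotic to one of the barriers $\Gamma_{k}$. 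To exclude this I would pull the barriers back under $f_{\lambda}$: this produces, along suitable vertical lines, sequences of pre-parabolic points running off to $\pm i\infty$, each of which is the landing point of hairs heading in (roughly) horizontal directions; these hairs lie in $J(f_{\lambda})$ and cut off the far parts of $\Sigma_{k_{0}}$ from the bounded region carrying $P$. Turning this picture into a proof means controlling the combinatorics of these hairs inside the exponential tracts, and this --- the statement that no Fatou component can slip out to infinity along or between the hairs of a tract --- is the main obstacle; for it I would follow the hair arguments of Dom\'{i}nguez--Fagella \cite{DF2008} and Rempe \cite{R2008}. Everything else in the argument is either a symmetry observation or the elementary estimate $|\sin z|^{2}=\sin^{2}(Re\,z)+\sinh^{2}(Im\,z)$ above.
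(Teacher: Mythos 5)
Your setup matches the paper's: boundedness of $\mathcal{P}(f_{\lambda})$, the Benini--Rempe--Gillen landing theorem producing a periodic hair $\gamma$ at $0$, the barrier $\gamma\cup\{0\}\cup(-\gamma)$ and its $2\pi\mathbb{Z}$--translates cutting the plane into strips, and the conclusion that every Fatou component is horizontally bounded. (A secondary remark: you justify the horizontal boundedness of $\gamma$ by a heuristic about exponential tracts in logarithmic coordinates; the paper proves it as Lemma~\ref{L:realbounded} by a topological argument --- a horizontally unbounded periodic hair would be forced to intersect its own $2\pi$--translate, contradicting injectivity. Your sketch would need to be made precise, but the statement itself is correct.)

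The genuine gap is exactly where you flag it: you never prove that an immediate parabolic basin cannot be vertically unbounded. Your half--strip argument only excludes the case where the escaping end of $P$ contains an open half--strip, and you explicitly leave the ``thin tentacle'' case as ``the main obstacle,'' deferring to the hair combinatorics of Dom\'{i}nguez--Fagella and Rempe. No such combinatorics is needed; this step is the heart of the paper's proof and it is a direct growth estimate. Once the whole parabolic cycle is confined to a vertical strip $B=\lbrace z : -N<Re(z)<N \rbrace$, take $z_{0}=x_{0}+iy_{0}$ in an unbounded periodic component with $\sinh\vert y_{0}\vert$ much larger than $N$. Then $\vert f_{\lambda}(z_{0})\vert=\vert\sin z_{0}\vert\geq\sinh\vert y_{0}\vert$, and since $f_{\lambda}(z_{0})$ must again lie in $B$, its imaginary part satisfies $\vert y_{1}\vert\geq\sqrt{\sinh^{2}\vert y_{0}\vert-N^{2}}$, again much larger than $N$. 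Iterating, $\vert y_{n}\vert\to\infty$ (the paper makes this quantitative: $\vert y_{n}\vert>\sqrt{c_{1}^{2^{n}}-1}\cdot N$), so $z_{0}\in I(f_{\lambda})\subseteq J(f_{\lambda})$, contradicting $z_{0}\in F(f_{\lambda})$. This single estimate disposes of every shape of unbounded end --- half--strip or tentacle alike --- and is the step your proposal is missing; with it, your reduction ``bounded image implies bounded preimage inside a strip'' then finishes the theorem just as the paper does.
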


Finally, using the result of Benini and Rempe--Gillen mentioned above, we can prove that the
components of the Fatou set of $ f_{\lambda} $ are bounded for some values of 
$ \lambda $ with $ \vert \lambda \vert >1 $ as follows. 

\begin{theorem}[Repelling Case] \label{T:repelling}
	If $ \vert \lambda \vert>1 $ and $ \mathcal{P}(f_{ \lambda }) $ is bounded, then 
	all the components of $ F(f_{ \lambda }) $ are bounded. 

\end{theorem}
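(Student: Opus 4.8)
\emph{Proof proposal.} The plan is to build, from the periodic hairs of \cite{BR2017}, a ``cage'': a locally finite family of pairwise disjoint curves joining $\infty$ to $\infty$ that cuts $\mathbb{C}$ into vertical strips of uniformly bounded width, each of which can meet at most one Fatou component. Granting this, an unbounded Fatou component would contain a point of huge imaginary part whose entire forward orbit stays inside strips of bounded real part, and a one--line estimate forces such an orbit to escape to $\infty$ --- contradicting $I(f_{\lambda})\subseteq J(f_{\lambda})$ \cite{EL1992}.

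I would first collect the standing facts: $f_{\lambda}$ is entire of order $1$, lies in $\mathscr{S}\subseteq\mathscr{B}$, has $S(f_{\lambda})=\{\lambda,-\lambda\}$ and no finite asymptotic value, and $\mathcal{P}(f_{\lambda})$ is bounded by hypothesis, so the theorem of Benini and Rempe--Gillen applies; $f_{\lambda}$ has no wandering domains, hence every Fatou component is eventually periodic; and $0$ is a repelling fixed point since $f_{\lambda}'(0)=\lambda$ with $|\lambda|>1$. By \cite{BR2017} there is an invariant hair $\gamma$ landing at $0$; it escapes through one of the two logarithmic tracts of $f_{\lambda}$ (which lie near $Im\,z\to\pm\infty$), and by the hair theory for finite--order maps in $\mathscr{B}$ (\cite{DT1986},\cite{RRRS2011}) the hair $\gamma$ is asymptotic to a vertical line, in particular it has bounded real part. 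Using $f_{\lambda}(-z)=-f_{\lambda}(z)$ and $f_{\lambda}(z+\pi)=-f_{\lambda}(z)$, each $\gamma+k\pi$ and $-\gamma+k\pi$, $k\in\mathbb{Z}$, is again a hair in $I(f_{\lambda})$ landing at $k\pi$, which lies in $J(f_{\lambda})$ as an iterated preimage of $0$. A computation with the functional equations shows these hairs are pairwise disjoint: an intersection would force some $f_{\lambda}^{n}(\gamma(s))=\pm f_{\lambda}^{n}(-\gamma(t))$, hence either contradict the disjointness of the distinct invariant hairs $\gamma$ and $-\gamma$, or place a point of $\gamma\subseteq I(f_{\lambda})$ at a multiple of $\pi$ or at a critical point $\tfrac{(2m+1)\pi}{2}$ of $\sin$ --- impossible, since each such point has bounded, hence non--escaping, forward orbit (the critical case precisely because $\mathcal{P}(f_{\lambda})$ is bounded). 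Thus each $\Gamma^{(k)}:=(\gamma+k\pi)\cup\{k\pi\}\cup(-\gamma+k\pi)$, completed by $\infty$, is a Jordan curve in $\widehat{\mathbb{C}}$ of bounded real part; the $\Gamma^{(k)}$ are ordered in the real direction with gaps $\asymp\pi$; $\mathbb{C}\setminus\bigcup_{k}\Gamma^{(k)}$ is a disjoint union of topological strips $R^{(k)}$ of uniformly bounded width; and since $\Gamma^{(k)}\subseteq J(f_{\lambda})$, every Fatou component of $f_{\lambda}$ lies in exactly one $R^{(k)}$.

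To finish, suppose $U$ is an unbounded Fatou component. By the previous step $|Re\,z|\le W_{0}<\infty$ on $U$, so $U$ contains points with $|Im\,z|$ arbitrarily large. The forward images $U_{n}=f_{\lambda}^{n}(U)$ are Fatou components lying in strips, and as there are no wandering domains $(U_{n})_{n\ge0}$ is eventually periodic, hence assumes only finitely many values; so there is $W^{*}<\infty$ with $|Re\,z|\le W^{*}$ for all $z\in\bigcup_{n\ge0}U_{n}$. Pick $p\in U$ with $|Im\,p|$ large enough (in terms of $W^{*}$ and $|\lambda|$) that, writing $Y=|Im\,p|$, one has $Y\ge W^{*}+1$ and $\tfrac{|\lambda|}{4}e^{\,Y-W^{*}}\ge 2Y$, and set $z_{n}=f_{\lambda}^{n}(p)$. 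Since $|\sin z|^{2}=\sin^{2}(Re\,z)+\sinh^{2}(Im\,z)$, whenever $|Im\,z_{n}|\ge 1$ we get $|z_{n+1}|=|\lambda|\,|\sin z_{n}|\ge\tfrac{|\lambda|}{4}e^{\,|Im\,z_{n}|}\ge\tfrac{|\lambda|}{4}e^{\,|z_{n}|-W^{*}}$, and combined with $|Im\,z_{n}|\ge|z_{n}|-W^{*}$ an induction gives $|z_{n}|\ge 2^{\,n}Y\to\infty$. Hence $p\in I(f_{\lambda})\subseteq J(f_{\lambda})$, contradicting $p\in U\subseteq F(f_{\lambda})$; so no Fatou component of $f_{\lambda}$ is unbounded.

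The hard part will be the geometry in the middle step: proving that the invariant hair at $0$ --- and therefore all of its translates --- is genuinely asymptotic to a vertical line rather than drifting off in the real direction, and that the constructed family is pairwise disjoint. This is where one really leans on the finite--order hair theory (\cite{DT1986},\cite{RRRS2011},\cite{BR2017}) together with the hypothesis that $\mathcal{P}(f_{\lambda})$ is bounded --- used both to invoke Benini--Rempe--Gillen and to keep critical points off the hairs --- so that the cage arguments of Dom\'{i}nguez--Fagella \cite{DF2008} and Rempe \cite{R2008} carry over to this more general setting; by contrast the escape estimate of the last step is elementary.
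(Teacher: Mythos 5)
Your overall architecture is the same as the paper's: use Benini--Rempe--Gillen to obtain a periodic hair landing at the repelling fixed point $0$, use the hair together with $-\gamma$ and its $2\pi\mathbb{Z}$--translates to confine every Fatou component to a vertical strip of bounded width, and then run an elementary $\sinh$--growth estimate to show that a point of large imaginary part whose orbit stays in such a strip must escape, contradicting $I(f_{\lambda})\subseteq J(f_{\lambda})$. Your closing estimate ($|z_{n+1}|\geq \tfrac{|\lambda|}{4}e^{|z_n|-W^{*}}$, giving $|z_n|\geq 2^{n}Y$) is correct and in fact cleaner than the paper's $c_1,c_2$ bookkeeping, and your treatment of preperiodic components (finitely many images, no wandering domains, no asymptotic values) matches the paper's.

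The genuine gap is exactly the step you yourself flag as ``the hard part'': the horizontal boundedness of the hair. You assert that $\gamma$ ``is asymptotic to a vertical line'' by appeal to the finite--order hair theory of \cite{DT1986,RRRS2011,BR2017}, but none of those references states this for you; what they give is that escaping points lie on rays that eventually enter a logarithmic tract, and for $\sin$ the tracts are \emph{half--planes} $\{|Im(z)|>R\}$, inside which a curve could a priori still drift horizontally without bound. Extracting bounded real part from the ray construction requires setting up the fundamental domains of width $\pi$ inside the tract and tracking external addresses --- an argument you would have to actually write. The paper instead closes this gap with a short self--contained topological argument (Lemma \ref{L:realbounded}, building on Lemma \ref{L:inH}): if $\gamma$ were horizontally unbounded, it would cross two consecutive components $R_{k_0}, R_{k_0+1}$ of $f_{\lambda}^{-1}(\mathbb{R})$ at parameters $t_0'<t_1'$ far enough out that $\Gamma=f_{\lambda}\circ\gamma$ satisfies $|\Gamma(t)|>2\pi$ there; the arc $\Gamma|_{[t_0',t_1']}$ then joins two points of the real axis at distance more than $4\pi$ while staying outside the disk of radius $2\pi$, so it together with the real axis bounds a region that its own translate $\Gamma+2\pi$ must both enter and exit; hence $\Gamma$ meets $\Gamma+2\pi$, contradicting injectivity of the periodic hair. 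You should either reproduce an argument of this kind or genuinely carry out the address/fundamental--domain analysis; as written, the citation does not bear the weight. (A minor additional remark: the pairwise disjointness of all the translated hairs, which you labor over, is not actually needed --- each translate of the curve $\gamma\cup\{0\}\cup(-\gamma)$ lies in $J(f_{\lambda})$ and is horizontally bounded, and that alone already forces every Fatou component into a horizontally bounded complementary region.)
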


\begin{definition}
	A map $ f $ is of \textbf{disjoint type} if $ f $ is hyperbolic with connected
	Fatou set.
\end{definition}

In the last Section, we use the same ideas on a more general family, the extended sine
family $ \fla(z)= \lambda \sin (z)+a $. We prove the following two theorems: 

\begin{theorem}[Repelling Case in Extended Family]\label{T:extended1}
	Let $ \fla(z)=\lambda \sin (z) + a $ with $ \vert \lambda \vert  > 1 $ a
	post--singularly bounded function, then all the components of the
	Fatou set of $ \fla $ are bounded if and only if for each of them exists a critical
	point that is not in the component.
\end{theorem}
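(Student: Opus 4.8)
The plan is to run the same hair-based argument used in Theorem~\ref{T:repelling}, but now allowing for the possibility that all critical points lie in a single Fatou component, which is the only obstruction to boundedness. First I would record the structural facts about $\fla$: it is a member of $\mathscr{S}$ (finitely many singular values, all critical values of the form $\pm\lambda+a$, no finite asymptotic values) and it has order of growth $1$, so the Benini--Rempe--Gillen theorem applies verbatim once $\mathcal{P}(\fla)$ is bounded. Since $\fla$ is post-singularly bounded by hypothesis, every repelling and parabolic periodic point is the landing point of at least one periodic hair, and every periodic hair lands at such a point. I would also note the periodicity $\fla(z+2\pi)=\fla(z)$ and the symmetry $\fla(\pi - z + 2\,\mathrm{arcsin\ stuff})$—more precisely the affine symmetry through the critical points of $\sin$—which together force the critical points to split into two classes modulo $2\pi$, exactly as in the $a=0$ case; this is what lets a single bounded Fatou component fail to ``capture'' more than finitely many grid translates.

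The forward direction: assume some Fatou component $U$ contains \emph{every} critical point; I must show some component is unbounded. The idea is that $\fla$ restricted to the complement of a large disk containing $S(\fla)$ is a covering onto the complement of that disk (class $\mathscr{B}$), with exponential tracts going off in the imaginary directions; if $U$ swallows all critical points then $U$ cannot be separated from $\infty$ by a periodic hair landing configuration, and a standard argument (as in Dom\'{i}nguez--Fagella and Rempe) shows $U$ must then be unbounded—indeed it must contain a full ``tract'' neighborhood of $i\infty$ or $-i\infty$. I would phrase this as: if all components were bounded, then by the landing theorem the repelling fixed points on $\partial U$ would be accessed by hairs, and the union of these hairs together with their landing points would bound $U$; but $U$ meets every critical orbit, so by the Riemann--Hurwitz / Fatou-component-counting argument $U$ would have to be the unique completely invariant (or eventually periodic absorbing) component, contradicting that $|\lambda|>1$ forces $J(\fla)$ to contain the imaginary-direction escaping hairs, which a bounded $U$ cannot contain.

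The reverse direction is the one parallel to Theorem~\ref{T:repelling}: assume that for \emph{each} Fatou component $V$ there is a critical point outside $V$, and show every $V$ is bounded. Fix a component $V$. It suffices to treat a periodic component and then pull back. If $V$ is attracting or parabolic, its closure contains an attracting/parabolic cycle, and the critical point $c\notin V$ means the other critical orbit stays in a bounded postsingular set away from $V$; now I take a periodic hair $\gamma$ landing at a repelling periodic point $p$ on $\partial V$ (such $p$ exists since $\partial V\subset J(\fla)$ and repelling periodic points are dense), note that $\gamma\subset I(\fla)\subset J(\fla)$ is disjoint from $V$, and use the $2\pi$-periodicity plus the symmetry to produce a whole family of such hairs whose union, being a closed connected subset of $J(\fla)$ separating the plane into bounded pieces, traps $V$ in one bounded complementary component. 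If $V$ is a Siegel disk, I use that $\partial V$ is contained in the closure of the postsingular set, which is bounded, so $V$ itself is bounded directly. Finally, a general Fatou component is a preimage $\fla^{-n}(V_0)$ of a periodic one; since $\fla$ maps bounded sets to bounded sets? — no, that's false, so instead I argue that each such preimage lies in a bounded region between consecutive translated hairs, because the full preimage under $\fla$ of a bounded set is contained in a ``logarithmic'' union of bounded tract-pieces, each separated from $\infty$ by hairs.

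The hard part will be the forward direction, i.e.\ making precise \emph{why} a component containing all critical points must be unbounded: one has to rule out the scenario of a bounded completely invariant component, and the cleanest route is probably to invoke directly that for $|\lambda|>1$ the map $\fla$ has points escaping to $\infty$ along hairs in $J(\fla)$ in (say) the direction $\mathrm{Im}(z)\to+\infty$, and to show that a Fatou component containing every singular value necessarily contains a half-strip going to $+i\infty$ (because there the map is conformally like $w\mapsto e^{w}$ away from singular values, and a univalent branch of the logarithm maps a neighborhood of $\infty$ in $F$ into $F$), giving the contradiction. The bookkeeping of translated/reflected hairs in the reverse direction is routine given the landing theorem, mirroring \cite{DF2008} and \cite{R2008}.
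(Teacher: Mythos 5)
There are genuine gaps in both directions. For the ``only if'' direction you have vastly overcomplicated matters and in doing so left the step unproved: the critical points of $\fla$ are exactly the points $\frac{\pi}{2}+k\pi$, $k\in\mathbb{Z}$, which form an unbounded subset of the real line, so a single Fatou component containing \emph{all} of them is unbounded for the trivial reason that it contains an unbounded set. No tracts, coverings, or Riemann--Hurwitz counting are needed, and the argument you sketch (``a standard argument shows $U$ must be unbounded'') is not actually carried out.

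The ``if'' direction has two more serious problems. First, you assert that the union of the landing hairs and their $2\pi\mathbb{Z}$--translates separates the plane ``into bounded pieces'' that trap $V$. This is false: each hair escapes to $\infty$ in the imaginary direction, so the complementary regions are vertical strips --- horizontally bounded but vertically unbounded. An extra ingredient is required to exclude a vertically unbounded Fatou component inside such a strip; in the paper this is Lemma \ref{L:allbounded}, an explicit iteration estimate showing that points of large imaginary part in a strip $\vert \mathrm{Re}(z)\vert<N$ have orbits escaping to $\infty$, hence lie in $J(\fla)$. Second, and more fundamentally, a single periodic hair tends to $\infty$ in only \emph{one} vertical direction (the analogue of Lemma \ref{L:inH}), so to cut the plane into strips you need a connected subset of $J(\fla)$ joining $+i\infty$ to $-i\infty$ --- e.g.\ two hairs with opposite ends landing at the same periodic point, or an accumulation of such configurations. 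The whole difficulty of the theorem is the case where this fails, i.e.\ where some Fatou component $U$ separates every ``upward'' hair from every ``downward'' hair. This is precisely where the hypothesis that each component omits a critical point enters: the paper shows, using the central symmetry of $\fla$ about $\frac{\pi}{2}$ and $2\pi$--periodicity, that such a $U$ would be forced to contain $\pm\frac{\pi}{2}$ and hence all critical points (the alternative being either an infinitely connected $U$ or a horizontally unbounded hair between $U$ and its reflection, both impossible). Your proposal never confronts this case and never genuinely uses the hypothesis, so the core of the proof is missing.
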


\begin{theorem}[Disjoint Type]
	Suppose that $ U $ is an unbounded component of the Fatou set of $ \fla $, where
	$ \vert \lambda \vert > 1 $ and $ \mathcal{P}(\fla) $ is bounded. If $ U $ is forward 
	invariant then $ \fla $ is of disjoint type.
\end{theorem}

\textbf{Acknowledgments}\\

The authors would like to thank to Patricia Dom\'{i}nguez Soto and Lasse Rempe--Gillen
by their many helpful comments about the manuscript: finding many mistakes, suggesting
improvements and pointing out to us references about previous work.

\section{Attracting Case} \label{S:one}

Consider the map $ f_{\lambda}(z)= \lambda \sin(z) $, we remark that 
$ \lambda \sin(-z)= - \lambda \sin(z) $, it implies that the orbits
of $ -z $ and $ z $ are symmetrical and thus the sets $ F(f_{\lambda}) $ and 
$ J(f_{\lambda}) $ have central symmetry with respect to the origin.\\

The Julia and Fatou sets of $ f_{\lambda} $ have another important central symmetry around
$ \frac{\pi}{2} $. That is immediate using the identities
\[
	\sin\left( \frac{\pi}{2} - z \right)= \cos(z) =\sin\left( \frac{\pi}{2} + z \right) 
\]  

So, if $ z_{1}= \frac{\pi}{2} + w $ and $ z_{2}= \frac{\pi}{2} - w $ then
$ f_{\lambda}(z_{1})=f_{\lambda}(z_{2}) $, consequently $ z_{1} \in F(f_{\lambda}) $
if and only if $ z_{2} \in F(f_{\lambda}) $, see Figure \ref{fig0}.\\

\begin{figure}[ptb]
	\begin{center}
		\includegraphics[width=0.8\linewidth]{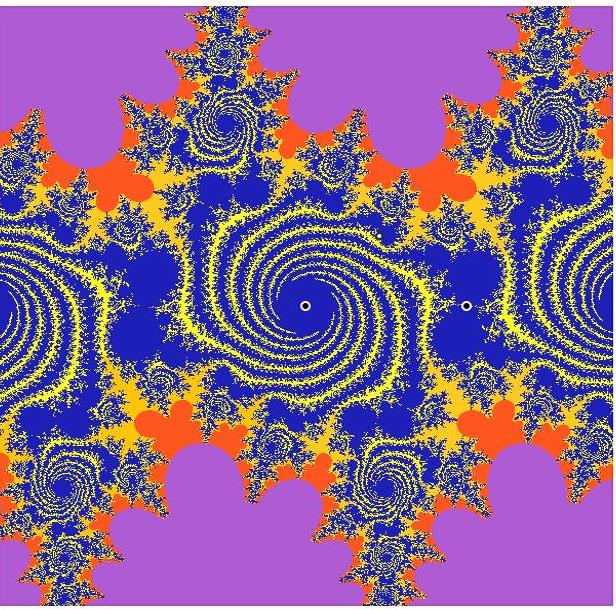}
	\end{center}
	\caption{Note the rotational symmetry around each of the marked points $ 0 $ and
	$ \frac{\pi}{2} $. This is the dynamical plane in the case 
	$ \lambda =0.7229 + 0.6981 i $}
	\label{fig0}
\end{figure}

Now, we can prove:\\

\begin{theorem}[Attracting Case] 
	For the map $ f_{ \lambda }(z)= \lambda \sin(z) $ where $ \vert \lambda \vert<1 $
	the Fatou set of $ f_{ \lambda } $ consists of a simply connected, completely
	invariant component $ U $. 

\end{theorem}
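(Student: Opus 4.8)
The plan is to recognise $U$ as the immediate basin of the attracting fixed point $0$ (note $f_\lambda(0)=0$ and $f_\lambda'(0)=\lambda$ with $|\lambda|<1$) and then to show, in turn, that $U$ is simply connected, that it contains every singular value, that it is completely invariant, and that it is all of $F(f_\lambda)$.

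Begin with the soft part. Along a curve to $\infty$ inside a horizontal strip $\sin$ is almost periodic, while outside such strips $|\sin|\to\infty$; hence $f_\lambda$ has no finite asymptotic value, and so maps every Fatou component onto a \emph{full} Fatou component (an omitted point would be an asymptotic value), in particular $f_\lambda(U)=U$. Being fixed, $U$ is non-wandering, hence simply connected by Baker's theorem, and $U\subsetneq\mathbb C$, so $U$ is conformally a disc. If $U$ contained no singular value, $f_\lambda|_U\colon U\to U$ would be an unbranched covering of a hyperbolic simply connected domain onto itself, hence a conformal automorphism fixing the interior point $0$, so $|f_\lambda'(0)|=1$, a contradiction; thus $U$ meets $S(f_\lambda)=\{\lambda,-\lambda\}$, and since $f_\lambda(-z)=-f_\lambda(z)$ forces $U=-U$ we get $\{\lambda,-\lambda\}\subset U$. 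The orbits of $\pm\lambda$ then stay in $U$ and converge to $0$, so $\mathcal P(f_\lambda)$ is a compact subset of $U$. Consequently there are no Siegel discs (the boundary of a Siegel cycle lies in $\mathcal P(f_\lambda)\subset F(f_\lambda)$ as well as in $J(f_\lambda)$), no Baker domains ($f_\lambda\in\mathscr B$, so $I(f_\lambda)\subset J(f_\lambda)$), and no wandering domains ($f_\lambda\in\mathscr S$); since an attracting or parabolic cycle always carries a singular value in its immediate basin, $\{0\}$ is the only non-repelling cycle, $U$ is the only periodic Fatou component, and every Fatou component is preperiodic onto $U$.

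Now reduce complete invariance to a single assertion. Any component $V$ of $f_\lambda^{-1}(U)$ is mapped by $f_\lambda$ onto a full component, necessarily $U$; hence $0\in f_\lambda(V)$, so $V$ contains a zero $k\pi$ of $f_\lambda$. The Fatou set is invariant under $z\mapsto z+\pi$ --- compose the symmetries $z\mapsto-z$ and $z\mapsto\pi-z$ (the latter holding because $f_\lambda(\pi-z)=f_\lambda(z)$), with $f_\lambda(z+\pi)=-f_\lambda(z)$ --- so translating $V$ by $-k\pi$ carries it to the Fatou component of $0$, namely $U$; thus $V=U+k\pi$, and $f_\lambda^{-1}(U)=\bigcup_{k\in\mathbb Z}(U+k\pi)$. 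If $\pi\in U$, then $U+\pi=U$, so $\pi\mathbb Z\subset U$, every $U+k\pi$ equals $U$, and $f_\lambda^{-1}(U)=U$; and then $F(f_\lambda)=U$, since a component $W$ with $f_\lambda^{\,n}(W)\subset U$ for minimal $n\ge1$ would give $f_\lambda^{\,n-1}(W)\subset f_\lambda^{-1}(U)=U$, hence $f_\lambda^{\,n-1}(W)=U$, contradicting minimality. So everything comes down to showing $\pi\in U$, equivalently that $F(f_\lambda)$ is connected.

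This connectedness is the heart of the matter, and the step I expect to be the main obstacle. One attack is to prove $[0,\pi]\subset U$: being connected and meeting $U$ at $0$, the segment lies in $U$ as soon as it lies in $F(f_\lambda)$, i.e.\ as soon as $f_\lambda^{\,n}\to0$ uniformly on $[0,\pi]$; and since $|\sin t|\le1$ on $\mathbb R$ one has $f_\lambda([0,\pi])\subset\lambda[0,1]\subset\overline{D(0,|\lambda|)}$, so it is enough to control the iterates of the segment $\lambda[0,1]$. The difficulty is concentrated in the regime $|\lambda|$ near $1$, where the crude bound $|\sin z|\le\sinh|z|$ no longer confines the orbit to a fixed disc and a sharper estimate is needed --- for instance exploiting that $f_\lambda$ is hyperbolic (all singular values lying in the attracting basin) to contract a suitable conformal metric along the orbit. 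Alternatively, and probably more in the spirit of the paper, one uses that $f_\lambda$ has finite order and bounded post-singular set, so that the periodic hairs of $f_\lambda$ land at repelling or parabolic points by Benini--Rempe--Gillen \cite{BR2017}, and, analysing the two logarithmic tracts of $f_\lambda$ (essentially the far upper and lower half-planes) in the manner of Devaney--Tangerman \cite{DT1986} and Dom\'{i}nguez--Fagella \cite{DF2008}, one realises $J(f_\lambda)$ as a Cantor bouquet, whose complement $F(f_\lambda)$ is connected. Either way, once $F(f_\lambda)$ is known to be connected it coincides with its component $U$, which is then completely invariant (being all of $F(f_\lambda)$) and simply connected by the first paragraph.
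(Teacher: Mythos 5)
Most of what you establish is correct and is carried out more carefully than in the paper: the absence of finite asymptotic values, simple connectivity of $U$ via Baker, the covering-map argument forcing $\{\lambda,-\lambda\}\subset U$, the elimination of all other periodic components, and the clean reduction of the whole theorem to the single assertion $\pi\in U$ (equivalently $f_\lambda^{-1}(U)=U$). But that single assertion is the entire content of the theorem, and you do not prove it --- you explicitly flag it as ``the main obstacle'' and offer only two sketches, neither of which closes the gap as stated. The first (show $f_\lambda^{n}\to 0$ uniformly on $[0,\pi]$) fails with the tools you name: the bound $|\sin z|\le\sinh|z|$ only confines the orbit of $\overline{D(0,|\lambda|)}$ to itself when $|\lambda|<\log(1+\sqrt{2})\approx 0.88$, and you give no substitute estimate for $|\lambda|$ near $1$. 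The second (realise $J(f_\lambda)$ as a Cantor bouquet) presupposes that $f_\lambda$ is of disjoint type, i.e.\ hyperbolic with \emph{connected} Fatou set --- which is essentially the statement you are trying to prove; to make it non-circular you would need to exhibit an absorbing Jordan domain $D\supset S(f_\lambda)$ with $\overline{f_\lambda(D)}\subset D$ (for instance a large hyperbolic disc of $U$ centred at $0$, using Schwarz--Pick and the fact that $f_\lambda|_U$ is not an automorphism) and then invoke the disjoint-type machinery, none of which appears in your proposal.

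The paper closes exactly this gap by a different, elementary device: it takes an arc $\gamma\subset W_0$ through $-\lambda,0,\lambda$, pulls it back to an unbounded curve $\Gamma=f_\lambda^{-1}(\gamma)\subset W_0$ containing all critical points, and uses the further preimages $\Gamma_k$ through $k\pi$ to cut out a region $A$ on which $f_\lambda$ is injective. If some component $W\neq W_0$ mapped onto $W_0$, periodicity would let one translate it into $A$, and then a point of $B_\delta(0)\cap A$ (where $B_\delta(0)$ is forward invariant) would acquire two $f_\lambda$-preimages inside $A$, contradicting injectivity. So the two arguments agree on everything except the crucial step, where the paper has a concrete fundamental-domain argument and you have an acknowledged placeholder. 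To repair your version, either import the paper's injectivity argument to show $U+\pi=U$, or carry out the Schwarz--Pick absorbing-domain construction to establish disjoint type honestly.
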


\begin{proof}
We know that $ \lambda $ and $ - \lambda $ are in the component of the Fatou set 
$ W_{0} $ that contains the origin since they are critical values, thus, there exists
a simple curve $ \gamma \subset W_{0} $, symmetrical with respect to the origin
connecting $ \lambda, \quad 0 $ and $ - \lambda $ in this order. Let 
$ \Gamma = f_{\lambda}^{-1}(\gamma) $, then $ \Gamma $ has the same central symmetry
that $ \gamma $, contains all the singular points, is unbounded and is contained in
$ W_{0} $. Furthermore, we can suppose that $ \Gamma $ is a simple curve.\\

Now, consider all the inverse images of $ \Gamma $ under $ f_{\lambda} $ and denote by
$ \Gamma_{k} $ that one than contains the point $ \pi k $. Clearly, all these curves
are contained in $ W_{0} $. Denote as $ \Gamma_{k}^{+} $ the part of the curve
$ \Gamma_{k} $ that is above of $ \Gamma $, see Figure \ref{fig1}.\\

\begin{figure}
	\begin{center}
		\includegraphics[width=0.8\linewidth]{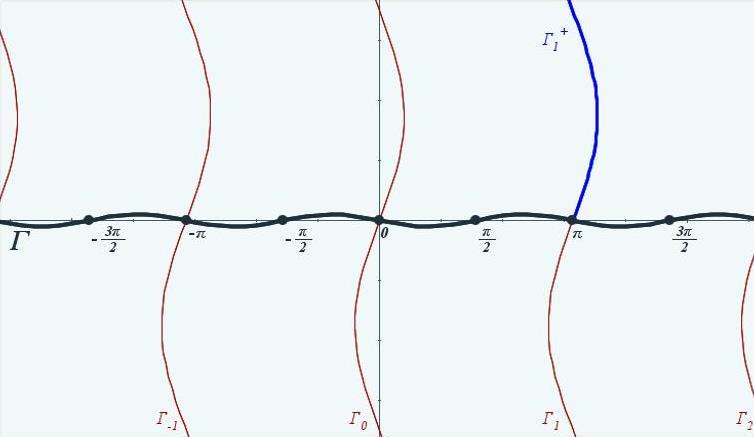}
	\end{center}
	\caption{ The curves $ \Gamma $, $ \Gamma_{k} $ and $  \Gamma_{k}^{+} $}
	\label{fig1}
\end{figure}

Suppose that $ W_{0} $ is not completely invariant, then exists a Fatou component
$ W \neq W_{0} $ such that $ f_{\lambda}(W)=W_{0} $. We can suppose w.l.o.g. that
$ W $ is contained in the region between $ \Gamma_{k_{0}}^{+} $, 
$ \Gamma_{k_{0}+2}^{+} $ and $ \Gamma $ for some $ k_{0} \in \mathbb{Z} $. By the 
periodicity of $ f_{\lambda} $ there exists a Fatou component 
$ W_{1}, \quad W_{1} \neq W_{0} $ ($ W_{1} $ is a translate of 
$ W $) such that $ f_{\lambda}(W_{1})=W_{0} $ and
$ W_{1} $ is contained in the region $ A $ determined by $ \Gamma_{0}^{+} $, 
$ \Gamma_{2}^{+} $ and $ \Gamma $, see Figure \ref{fig2}\\

\begin{figure}
	\begin{center}
		\includegraphics[width=0.8\linewidth]{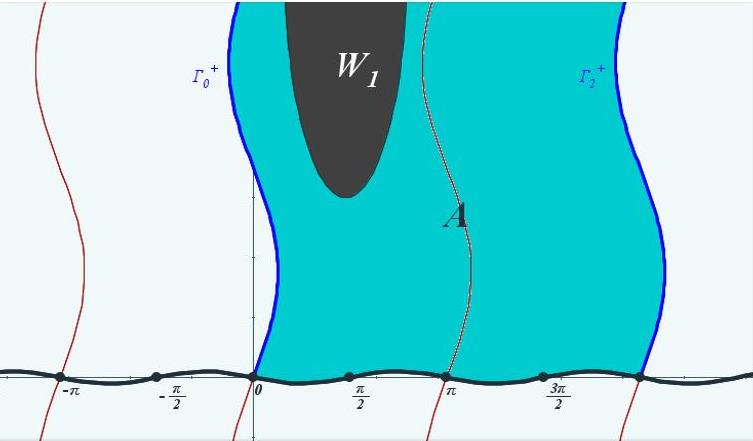}
	\end{center}
	\caption{$ W_{1} \subset A $}
	\label{fig2}
\end{figure}

If we consider $ f_{\lambda} $ restricted to the region $ A $, $ f_{\lambda} $ is
$ 1-1 $ and onto its image 
$ B= f_{\lambda}(A)= \mathbb{C}\setminus\left( \gamma\cup\Gamma^{*} \right)   $,
where $ \Gamma^{*}=f_{\lambda}(\Gamma_{0}^{+})=f_{\lambda}(\Gamma_{2}^{+}) $.\\
 
Now, let $ \delta >0 $ such that 
$ f_{\lambda}(B_{\delta}(0))\subseteq B_{\delta}(0) \subseteq W_{0} $ 
($ \delta $ exists because $ 0 $ is an attractive fixed point) and let
$ z_{0} \in B_{\delta}(0) \cap A $, then it follows that 
$ z_{1} = f_{\lambda}(z_{0})\in B \cap W_{0} $, but, because 
$ f_{\lambda}(W_{1}) = W_{0}  $, exists $ z_{2} \in W_{1} \subseteq A $ such
that $ f_{\lambda}(z_{2}) = z_{1} $ contradicting the fact that $ f_{\lambda} $
is $ 1-1 $ on $ A $. So $ W_{0} $ is completely invariant.\\

Furthermore, because $ W_{0} $ is unbounded, it is simply connected by Theorem 1 in Baker \cite{B1975}.
\end{proof}

\section{Parabolic Case} \label{S:two}

For the parabolic case we need some previous results.\\

\begin{definition}
	Let $ f $ be an entire function. $ f $ is \textbf{criniferous} if for every
	$ z \in I(f) $ and for all sufficiently large $ n $ there is an arc 
	$ \gamma_{n} $ connecting $ f^{n}(z) $ to $ \infty $, in such a way that
	$ \gamma_{n+1}=f(\gamma_{n}) $ and 
	$ \min_{z \in \gamma_{n}}\vert z \vert \rightarrow \infty $.
\end{definition}

It has been shown in \cite{RRRS2011} that if $ f \in \mathscr{B} $ and is a finite
composition of functions of finite order of growth, then $ f $ is criniferous.\\

In our case, we have that $ \sin(z) $ has finite order of growth since
\begin{align*}
	\vert \sin(z) \vert &\leq \vert \cosh \vert y \vert \vert = \frac{e^{\vert y \vert}+e^{-\vert y \vert }}{2} \\
	&\leq \frac{e^{\vert y \vert}+e^{\vert y \vert }}{2}= e^{\vert y \vert} \leq e^{\vert z \vert}	
\end{align*}

and, because all the polynomials have finite order of growth, $ f_{\lambda} $ is 
criniferous for all $ \lambda \in \mathbb{C}^{*} $.\\

We will use the following theorem due to Benini and Rempe--Gillen \cite{BR2017}
in order to get periodic hairs in the parabolic and repelling cases 
(with post--singular set bounded).

\begin{theorem}[Benini, Rempe--Gillen] \label{T:Benini}
	Let $ f $ be a transcendental entire function such that $ \mathcal{P}(f) $ is
	bounded. Then every periodic ray of $ f $ lands at a repelling or parabolic
	periodic point. If, in addition, $ f $ is criniferous, then conversely every
	repelling or parabolic periodic point of $ f $ is the landing point of at
	least one and at most finitely many periodic hairs. 
\end{theorem}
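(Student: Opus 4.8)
The plan is to prove the two assertions separately, and in both to exploit the hyperbolic expansion furnished by the bounded post--singular set. Since $\mathcal{P}(f)$ is closed, bounded and forward invariant, the domain $W=\mathbb{C}\setminus\mathcal{P}(f)$ omits at least three points of $\widehat{\mathbb{C}}$ and is therefore hyperbolic, and $f^{-1}(W)\subseteq W$ because $f(\mathcal{P}(f))\subseteq\mathcal{P}(f)$. As $W$ contains no singular value of $f$, the restriction $f\colon f^{-1}(W)\to W$ is an unbranched covering; by Schwarz--Pick together with the domain monotonicity of the hyperbolic metric this yields the expansion estimate $\|Df(z)\|_{W}\geq 1$ for every $z\in f^{-1}(W)$, with strict and quantifiable expansion away from $\mathcal{P}(f)$. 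Replacing $f$ by the iterate $f^{p}$ that fixes the ray under consideration (note that $\mathcal{P}(f^{p})$ is again bounded, being contained in $\mathcal{P}(f)$, and that finite compositions of finite--order functions of $\mathscr{B}$ remain criniferous), I reduce to a hair $\gamma$ that is invariant, with $f(\gamma(t))=\gamma(t+1)$ and escaping as $t\to\infty$.

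For the first assertion I first show that the limit $z_{0}=\lim_{t\to 0}\gamma(t)$ exists. The accumulation set of the base end is a non--empty, compact, connected, forward--invariant subset of $J(f)\subseteq W$; taking the branch of $f^{-1}$ that fixes this accumulation set, the expansion estimate makes that branch a contraction of the metric of $W$, so the hyperbolic lengths of the backward iterates of a fundamental segment of $\gamma$ decay geometrically and are summable. Consequently $\gamma$ has finite hyperbolic length toward its base, and a Cauchy argument forces the limit $z_{0}$ to exist. Once landing is established, $z_{0}$ is a periodic point of $f$ lying in $J(f)$, so it is neither attracting nor the centre of a Siegel disk; applying the classical Snail Lemma to the invariant curve $\gamma$ approaching $z_{0}$ excludes an irrationally indifferent (Cremer) multiplier as well, leaving only a repelling or parabolic multiplier.

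For the converse I use that $f$ is criniferous, so that $I(f)$ is organized into hairs and every escaping point lies on one. Fix a repelling (resp. parabolic) periodic point $z_{0}$ and pass to the iterate for which it is fixed. In the repelling case the inverse branch $g$ of $f$ fixing $z_{0}$ is a local contraction, and it acts on the set of ray germs landing at $z_{0}$; a contraction--mapping and normal--family argument produces a germ fixed by $g$, which by crinifery extends to a full hair landing at $z_{0}$, giving at least one ray. In the parabolic case the same conclusion is reached by replacing linearization with Fatou coordinates and working inside a repelling petal. Finiteness follows from a local separation estimate: in a linearizing (resp. Fatou) coordinate the distinct rays landing at $z_{0}$ are separated by a definite amount across a fundamental annulus (resp. petal), and only finitely many ray germs can be invariant under the local dynamics.

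The main obstacle is the landing step of the first assertion: the expansion estimate degenerates near $\mathcal{P}(f)$, so the geometric decay of the fundamental segments is not uniform, and one must show that the portions of $\gamma$ that approach $\mathcal{P}(f)$ are negligible --- essentially that the ray spends most of its hyperbolic length in the region where expansion is strong. Controlling this interaction between the escaping ray and the post--singular set, rather than the subsequent Snail--Lemma classification, is where the real work lies.
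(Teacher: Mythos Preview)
The paper does not prove this theorem at all: it is quoted from Benini and Rempe--Gillen \cite{BR2017} as an external tool, with a brief historical remark crediting Rempe \cite{R2008} for the first assertion and Mihaljevi\'{c}--Brandt \cite{M2010} for a special case of the second. There is therefore nothing to compare your argument against in this paper; you are attempting to reprove a deep cited result rather than something the authors establish.

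On the substance of your sketch: the first assertion is indeed proved (in \cite{R2008}) via hyperbolic expansion on $W=\mathbb{C}\setminus\mathcal{P}(f)$, so your outline is in the right spirit, though the step ``take the branch of $f^{-1}$ that fixes the accumulation set'' is not well posed --- the accumulation set need not lie in the domain of a single inverse branch, and the actual argument pulls back fundamental arcs one step at a time along the ray itself. Your treatment of the converse, however, substantially understates the difficulty. The argument in \cite{BR2017} is not a local contraction/normal--families fixed--point argument on ray germs: for general criniferous $f$ with bounded $\mathcal{P}(f)$ one has no a priori control on which hairs approach a given periodic point, and the proof instead builds a combinatorial and topological structure (what the authors call ``dreadlocks'') on the escaping set, together with an expansion argument along canonical tails, to manufacture the landing hairs and bound their number. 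A genuine proof of the second assertion requires engaging with that machinery; the sketch you give does not supply a mechanism for producing even one periodic hair landing at a prescribed repelling point.
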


Previous work behind this important theorem was made, for the first part of the Theorem
and with this generality, by Rempe \cite{R2008}. The last part of this Theorem was
proved by Mihaljevi\'{c}--Brandt \cite{M2010} under the assumption that $ f $ is
geometrically finite.

\begin{lemma} \label{L:inH}
	If $ \gamma $ is a periodic hair of $ f_{\lambda} $ landing at $ 0 $, then exists 
	$ t_{0} \in \mathbb{R}^{+} $ such that either $ \gamma (t) $ is contained in the
	upper half--plane for all $ t > t_{0} $ or $ \gamma (t) $ is contained in the lower
	half--plane for all $ t > t_{0} $. 
\end{lemma}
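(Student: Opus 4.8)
The plan is to use the behavior of $f_\lambda(z) = \lambda \sin(z)$ far out in the complex plane, where the "hair" lives. A periodic hair $\gamma$ lands at $0$ and escapes to $\infty$; since $f_\lambda$ is periodic with period $2\pi$ and $|\sin(z)|$ grows like $\tfrac{1}{2}e^{|\mathrm{Im}(z)|}$ as $|\mathrm{Im}(z)| \to \infty$, the escaping dynamics forces $|\mathrm{Im}(\gamma(t))| \to \infty$ along the hair (a point with bounded imaginary part stays in a bounded vertical strip and cannot escape under $f_\lambda$, since the real part is essentially reset modulo $2\pi$ by the sine). So first I would record the elementary fact: there is $t_1$ with $|\mathrm{Im}(\gamma(t))|$ large for all $t \ge t_1$, and in particular $\gamma(t)$ avoids the strip $\{|\mathrm{Im}(z)| \le C\}$ for $t \ge t_1$. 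That means the tail of $\gamma$ lies in the open set $\{\mathrm{Im}(z) > C\} \cup \{\mathrm{Im}(z) < -C\}$, which has exactly two connected components.

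**The connectivity argument.** Since $\gamma\big((t_1,\infty)\big)$ is connected and contained in the union of the upper and lower half-planes shifted by $\pm C$, it must lie entirely in one of the two components — hence entirely in the upper half-plane or entirely in the lower half-plane, once $t > t_1$. Setting $t_0 = t_1$ gives the statement. The one subtlety is that I must justify the claim that the hair genuinely leaves every horizontal strip, not merely that $|\gamma(t)| \to \infty$ (it could a priori escape to the right or left with bounded imaginary part). This is where I would invoke the structure of $\mathscr{B}$: in the Eremenko–Lyubich class there is a logarithmic tract / standard estimate showing points of $I(f)$ eventually enter the region where $|f(z)|$ is large, and for $f_\lambda$ large values of $|f_\lambda(z)| = |\lambda||\sin z|$ occur only when $|\mathrm{Im}(z)|$ is large. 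Concretely: if $\mathrm{Im}(\gamma(t)) $ stayed bounded, say $|\mathrm{Im}(\gamma(t))| \le M$ for a sequence $t \to \infty$, then $|f_\lambda(\gamma(t))| = |\gamma(t+1)|$ would be bounded by $|\lambda|\cosh M$, contradicting $|\gamma(t+1)| \to \infty$. Iterating this observation one step at a time along the hair and using $f_\lambda(\gamma(t)) = \gamma(t+1)$ pins down that $|\mathrm{Im}(\gamma(t))| \to \infty$.

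**The main obstacle.** The genuinely delicate point is not the half-plane dichotomy itself but making precise that the tail of the hair cannot oscillate between the upper and lower strips before settling — i.e. that there is a single threshold $t_0$ past which it stays on one side. Once we know $|\mathrm{Im}(\gamma(t))| \to \infty$, pick $C$ so large that $|\mathrm{Im}(\gamma(t))| > C$ for all $t \ge t_1$; then $\gamma((t_1,\infty))$ is a connected subset of $\{|\mathrm{Im} z| > C\}$, whose two components are separated, so it lies in one of them — there is no room for oscillation because crossing would require passing through $\{|\mathrm{Im} z| \le C\}$. I expect the write-up to spend most of its effort on the escaping estimate $|\mathrm{Im}(\gamma(t))| \to \infty$ (citing the standard bound $|\sin(x+iy)| \le \cosh y$ already displayed in the paper, together with $f_\lambda(\gamma(t)) = \gamma(t+1) \to \infty$), and then the dichotomy is a one-line topological consequence.
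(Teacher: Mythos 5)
Your proof is correct and follows essentially the same route as the paper's: both rest on the bound $|\sin(x+iy)|\le\cosh|y|$, so that a tail of the hair meeting a fixed horizontal strip (the paper uses the real axis itself) at arbitrarily large times would have image under $f_{\lambda}$ confined to a bounded set, contradicting that $f_{\lambda}\circ\gamma$ tends to $\infty$. The only cosmetic differences are that you phrase the conclusion via $|\mathrm{Im}\,\gamma(t)|\to\infty$ plus connectedness of the tail where the paper argues directly from infinitely many real-axis crossings, and that for a hair of period $n>1$ one has $f_{\lambda}^{\,n}(\gamma(t))=\gamma(t+1)$ rather than $f_{\lambda}(\gamma(t))=\gamma(t+1)$ --- but your estimate survives because $f_{\lambda}\circ\gamma$ is itself a hair of the cycle tending uniformly to $\infty$, which is exactly what the paper invokes.
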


\begin{proof}
	Suppose that the conclusion is false, then exists a sequence 
	$ ( t_{n} )_{n \in \mathbb{N}} $, $ t_{n} > t_{0} $ such that 
	$ \gamma(t_{n}) $ is on the real axis and $ t_{n}\rightarrow \infty $ when
	$ n\rightarrow \infty $. So, for all $ n \in \mathbb{N} $,
	$ f_{\lambda}(\gamma(t_{n})) $ is contained in the segment joining 
	$ - \lambda $ and $ \lambda $. This contradicts the fact that 
	$ f_{\lambda} \circ \gamma $ is a periodic hair that converges uniformly to
	$ \infty $.
\end{proof}

\begin{lemma} \label{L:realbounded}
	If $ \gamma $ is a periodic hair of $ f_{\lambda} $ that lands at $ 0 $, then
	$ \gamma $ is horizontally bounded.
\end{lemma}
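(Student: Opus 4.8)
The plan is to exploit $|\lambda|=1$ together with the explicit behaviour of $\sin$ in a half--plane. By Lemma~\ref{L:inH} we may assume that, writing $\gamma(t)=x(t)+iy(t)$, the point $\gamma(t)$ lies in the open upper half--plane for all $t>t_{0}$ (the lower half--plane case is symmetric). Since the arc $\gamma((0,t_{0}])$ together with its landing point $0$ is a continuous image of a compact interval, it is bounded; so it suffices to bound $x(t)$ for $t>t_{0}$.

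The first step is to show $y(t)\to\infty$ as $t\to\infty$. From $|\sin(u+iv)|\le\cosh v$ for $v\ge 0$ one obtains, by iterating and using $|\lambda|=1$, an estimate $|f_{\lambda}^{\,n}(z)|\le\Phi_{n}(\operatorname{Im} z)$ valid for $\operatorname{Im} z\ge 0$, where $\Phi_{n}$ is the fixed increasing function given by the $n$--fold composition of $v\mapsto\cosh v$. If $\gamma$ is a periodic hair of period $n$, then $\gamma(t+1)=f_{\lambda}^{\,n}(\gamma(t))$, hence $|\gamma(t+1)|\le\Phi_{n}(y(t))$; since $|\gamma(t)|\to\infty$ and $\Phi_{n}$ is increasing, $y(t)\to\infty$.

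The decisive step is the exact identity $f_{\lambda}(z)=\tfrac{i\lambda}{2}\,e^{v}e^{-iu}\bigl(1-e^{2iz}\bigr)$ for $z=u+iv$, in which $|e^{2iz}|=e^{-2v}$. Put $w(t):=f_{\lambda}(\gamma(t))$. Taking arguments, and using that for $t$ large $|e^{2i\gamma(t)}|<1$ (so $1-e^{2i\gamma(t)}$ has positive real part and $\arg\bigl(1-e^{2i\gamma(t)}\bigr)\to 0$), we get $\arg w(t)\equiv\arg(i\lambda)-x(t)+\arg\bigl(1-e^{2i\gamma(t)}\bigr)\pmod{2\pi}$. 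Moreover $\gamma':=f_{\lambda}\circ\gamma$ is again a periodic hair landing at $f_{\lambda}(0)=0$: it satisfies $f_{\lambda}^{\,n}(\gamma'(t))=f_{\lambda}(\gamma(t+1))=\gamma'(t+1)$ and $|\gamma'(t)|=|w(t)|\to\infty$ by the first step. Hence Lemma~\ref{L:inH} applies to $\gamma'$ and provides $t_{1}$ such that $w(t)=\gamma'(t)$ stays in one fixed half--plane for all $t>t_{1}$; so $\arg w(t)$ may be chosen in a fixed interval of length $\pi$ and is continuous in $t$.

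It remains a rigidity argument. Fixing the branches above, the quantity
\[
k(t)=\frac{1}{2\pi}\Bigl(x(t)-\arg(i\lambda)+\arg w(t)-\arg\bigl(1-e^{2i\gamma(t)}\bigr)\Bigr)
\]
is integer--valued and continuous on $(\max(t_{0},t_{1}),\infty)$, hence constant, say $\equiv k_{0}$. Therefore $x(t)=\arg(i\lambda)-\arg w(t)+\arg\bigl(1-e^{2i\gamma(t)}\bigr)+2\pi k_{0}$, and the right--hand side is bounded (the first and last terms are constants, $\arg w(t)$ lies in a fixed interval of length $\pi$, and $\arg(1-e^{2i\gamma(t)})\in(-\tfrac{\pi}{2},\tfrac{\pi}{2})$). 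Hence $x$ is bounded on $(\max(t_{0},t_{1}),\infty)$, and being bounded on the complementary bounded parameter interval as well, $\gamma$ is horizontally bounded. The main obstacle is exactly this last point: the asymptotics only pin down $\operatorname{Re}\gamma(t)$ modulo $2\pi$, and one needs continuity of the hair (and of the auxiliary arguments) to prevent the real part from drifting by multiples of $2\pi$; a subsidiary technical check is that $f_{\lambda}\circ\gamma$ is again a periodic hair landing at $0$, so that Lemma~\ref{L:inH} is available for it.
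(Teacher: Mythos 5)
Your argument is correct, but it takes a genuinely different route from the paper's. The paper argues by contradiction with a planar--topology and translation trick: if $\gamma$ were horizontally unbounded it would cross two consecutive components $R_{k_0}$ and $R_{k_0+1}$ of $f_{\lambda}^{-1}(\mathbb{R})$ while staying in the upper half--plane, so $\Gamma=f_{\lambda}\circ\gamma$ would run from a real point of modulus greater than $2\pi$ to a real point of the opposite sign; the region $G$ cut off by this arc and the real axis then forces the $2\pi$--translate $\Gamma+2\pi$ (again a piece of a hair, by periodicity of $f_{\lambda}$) to meet $\Gamma$, and the contradiction comes from injectivity of the hair. You instead use the exact factorization $f_{\lambda}(z)=\tfrac{i\lambda}{2}e^{v}e^{-iu}\bigl(1-e^{2iz}\bigr)$ and track arguments: once $y(t)\to\infty$ (your $\Phi_{n}$ estimate), $\arg f_{\lambda}(\gamma(t))\equiv\arg(i\lambda)-x(t)+o(1)\pmod{2\pi}$, and since $f_{\lambda}\circ\gamma$ is eventually confined to one half--plane (Lemma~\ref{L:inH} applied to the image hair), the integer--valued continuous function $k(t)$ must be constant, which pins $x(t)$ to a bounded range. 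Both proofs run on the same engine --- Lemma~\ref{L:inH} plus the fact that a horizontal excursion of $\gamma$ forces $f_{\lambda}\circ\gamma$ to swing across the real axis --- but your version replaces the paper's intersection--of--translates step (whose final contradiction is stated rather tersely in the paper) by an explicit winding--number computation; this buys a cleaner, more quantitative contradiction at the price of the preliminary estimate $y(t)\to\infty$ and some bookkeeping of branches.

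Two small points to tighten, neither of which affects validity. First, drop the opening claim that you are exploiting $|\lambda|=1$: the lemma is invoked in the paper also for $|\lambda|>1$ (the repelling case and the extended family), and your argument survives verbatim with $\Phi_{1}(v)=|\lambda|\cosh v$ in place of $\cosh v$, while the factorization identity and the constant $\arg(i\lambda)$ are independent of $|\lambda|$. Second, the proof of Lemma~\ref{L:inH} actually uses that $f_{\lambda}$ applied to the hair in question tends to infinity; so to apply it to $\gamma'=f_{\lambda}\circ\gamma$ you need $f_{\lambda}\circ\gamma'=f_{\lambda}^{2}\circ\gamma\to\infty$, not merely $|\gamma'(t)|\to\infty$. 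This follows from the same bound, $|\gamma(t+1)|\le\Phi_{n-2}\bigl(|f_{\lambda}^{2}(\gamma(t))|\bigr)$, and injectivity of $\gamma'$ is never needed, so your ``subsidiary technical check'' does go through --- it is just worth saying which property of $\gamma'$ the half--plane conclusion really rests on.
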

	
\begin{proof}
	Suppose that $ \gamma $ is a periodic hair horizontally unbounded and consider the
	connected components $ R_{k} $ of the inverse image of the real axis under 
	$ f_{\lambda} $ where $ R_{k} $ is the component that contains the point 
	$ k \pi, \quad k \in \mathbb{Z} $ (in the case 
	$ \lambda \in \mathbb{R}, \quad R_{k} $ will be the line 
	$ Re(z)=\frac{\pi}{2}+k \pi $). In the following we denote 
	$ f_{\lambda} \circ \gamma $ as $ \Gamma $.\\
	
	Since $ \Gamma $ converges uniformly to
	$ \infty $ and $ \gamma $ is horizontally unbounded, exists $ t_{0} \in \mathbb{R}^{+} $
	that satisfies the following properties:
	\begin{enumerate}
		\item $ \vert\Gamma(t)\vert > 2 \pi \qquad \forall t \geq t_{0} $;
		\item $ \gamma(t_{0})\in R_{k_{0}} $ for some
		$ k_{0} \in \mathbb{Z^{+}} $ (w.l.o.g.);
		\item $ \gamma(t) \in \mathbb{H} \qquad \forall t \geq t_{0} $ (w.l.o.g., by
		Lemma \ref{L:inH}), where $ \mathbb{H} $ is the upper half--plane.
	\end{enumerate}
	By Property 2, $ \Gamma(t_{0}) \in \mathbb{R} $. Furthermore, because $ \gamma $ is 
	horizontally unbounded, exists $ t_{1}>t_{0} $ such that 
	$ \gamma(t_{1}) \in R_{k_{0}+1} $;
	so, we can get $ t'_{0} $ and $ t'_{1} $ with 
	$ t_{0} \leq t'_{0} < t'_{1} \leq t_{1} $ and satisfying
	$ \gamma(t'_{0}) \in R_{k_{0}} \quad \gamma(t'_{1}) \in R_{k_{0}+1} $ and 
	$ \gamma(t) \in B_{k_{0}}\cap \mathbb{H} $ for all 
	$ t, \quad t'_{0} < t < t'_{1} $. Here,	$ B_{k_{0}} $ is the open strip 
	determined by $ R_{k_{0}} $ and $ R_{k_{0}+1} $.\\
	
	By Property 1, we can suppose w.l.o.g. that 
	$ \Gamma(t'_{0}) > 2 \pi $, so 
	$ \Gamma(t'_{1}) < -2 \pi $.
	Finally, consider the open set $ G $ bounded by 
	$ \Gamma_{1}(t) = \Gamma(t), \quad t'_{0} \leq t \leq t'_{1} $ and
	the real axis.\\

	Since $ f_{\lambda} $ is periodic, $ \Gamma_{2} = \Gamma_{1} + 2 \pi $ is
	a pre--periodic hair. Now, we have that there exist points of $ \Gamma_{2} $ inside
	and outside of $ G $, so $ \Gamma_{1} \cap \Gamma_{2} \neq \varnothing $. If
	$ w \in \Gamma_{1} \cap \Gamma_{2} $ then $ w $ and $ w-2 \pi $ are both in $ \Gamma $ 
	contradicting its injectivity as a periodic	hair, see Figure \ref{fig3}.\\
	
	\begin{figure}
	\begin{center}
		\includegraphics[width=0.8\linewidth]{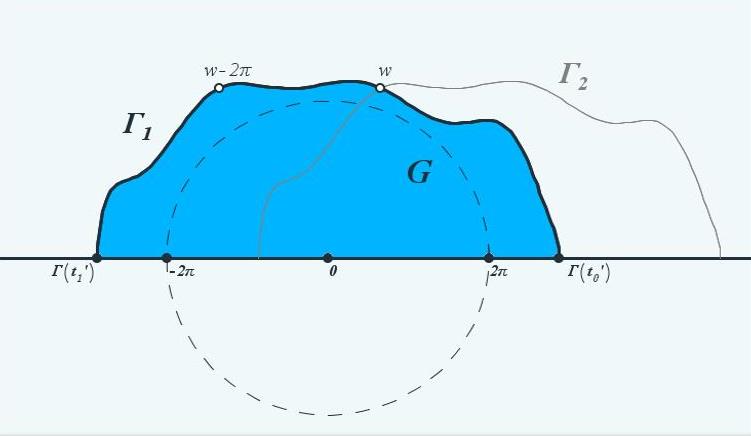}
	\end{center}
	\caption{ $ \Gamma_{1} $ must intersect its translate $ \Gamma_{2} $.}
	\label{fig3}
\end{figure}	
	
	Hence, $ \gamma $ must be horizontally bounded.
	
\end{proof}

Now we can prove

\begin{theorem}[Parabolic Case] \label{T:parabolic1}
	Let $ f_{ \lambda }(z)= \lambda \sin(z) $ with $ \lambda = e^{2 \pi i \frac{p}{q}} $, 
	$  p,q \in \mathbb{Z}, \quad q \neq 0 $ and $ (p,q)=1 $
	then all the components of $ F(f_{ \lambda }) $ are bounded. 

\end{theorem}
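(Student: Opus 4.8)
The plan is to use the parabolic structure of $f_\lambda$ at the origin together with the landing of periodic hairs guaranteed by Theorem~\ref{T:Benini}. When $\lambda=e^{2\pi i p/q}$, the origin is a parabolic fixed point (since $f_\lambda'(0)=\lambda$ is a primitive $q$-th root of unity), so by Leau--Fatou the immediate parabolic basin consists of finitely many attracting petals permuted cyclically by $f_\lambda$, and $0\in\partial W$ for each petal $W$. The key observation is that $f_\lambda$ is criniferous (established in the excerpt) and has finite post--singular set (the singular set is the two critical values $\pm\lambda$, which lie in the parabolic basin and converge to $0$, so $\mathcal P(f_\lambda)$ is a bounded set). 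Hence Theorem~\ref{T:Benini} applies: the parabolic point $0$ is the landing point of at least one periodic hair $\gamma$, and by Lemmas~\ref{L:inH} and~\ref{L:realbounded} such a hair is eventually contained in a fixed half--plane and is horizontally bounded; by symmetry (the central symmetry about $0$) there is also a periodic hair $-\gamma$ landing at $0$ from the other half--plane.

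Next I would show that these two hairs, together with the point $0$, separate the plane in a way that traps any periodic Fatou component attached to $0$. Concretely, $\gamma\cup\{0\}\cup(-\gamma)$ is a closed curve passing through $\infty$ (an unbounded Jordan arc through $0$), and it lies in $J(f_\lambda)$, so it cannot meet any Fatou component. The horizontal boundedness from Lemma~\ref{L:realbounded} means this curve is contained in a vertical strip $\{|\operatorname{Re} z|\le M\}$; more usefully, I want to exhibit a \emph{bounded} Jordan domain whose boundary lies in $J(f_\lambda)$ and which contains a given attracting petal. For this I would combine a hair landing at $0$ with its various translates $\gamma+2\pi k$ and $-\gamma+2\pi k$ (which, by periodicity, are pre-periodic hairs landing at the other parabolic-type points $\pi k$, or at least at points of $J(f_\lambda)$) to build the boundary of a bounded region surrounding the petal. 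The horizontal boundedness is exactly what guarantees finitely many translates suffice to close up a bounded region; the vertical control comes from the uniform escape of $\Gamma=f_\lambda\circ\gamma$ to $\infty$, which forces $\gamma$ itself to escape and hence to leave every bounded box eventually, but — and here is the subtlety — it escapes \emph{within} the strip, so I must argue it escapes vertically (to $\pm i\infty$) rather than drifting off horizontally, which is precisely Lemma~\ref{L:realbounded} again.

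Having trapped the immediate parabolic basin (all $q$ petals) inside a bounded Jordan domain $D$ with $\partial D\subset J(f_\lambda)$, I would conclude that every periodic Fatou component is bounded: a periodic component either is one of these petals (hence inside $D$) or, being a component of $F(f_\lambda)$, is disjoint from $\partial D$ and from the petals, so it lies entirely in one complementary component of $\partial D$; the bounded complementary components are then handled by noting $D$ was chosen large enough, or by using translates of $D$ to cover all of $\mathbb C$ with bounded Jordan domains bounded by arcs in $J(f_\lambda)$. Finally, any preperiodic (non-periodic) Fatou component $V$ maps after finitely many steps onto a periodic one $W$; since $W$ is bounded and $f_\lambda$ maps bounded sets to bounded sets, one pulls back: if $f_\lambda(V)\subset W'$ with $W'$ bounded, then $V$ is contained in $f_\lambda^{-1}(W')$, but $f_\lambda^{-1}$ of a bounded set need not be bounded, so instead I would show directly that $V$ is contained in one of the bounded complementary components of one of the translated Jordan curves, using that $\partial V\subset J(f_\lambda)$ and that these curves tile $\mathbb C$ into bounded pieces. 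The main obstacle I anticipate is the second step: carefully assembling the landing hair, its reflection, and the right finite collection of $2\pi$-translates into an honest bounded Jordan curve enclosing the petals — one must verify the translated hairs land at the appropriate points, do not cross each other (injectivity of hairs plus the separation argument in the style of Lemma~\ref{L:realbounded}), and genuinely close up; the rest is soft topology once that combinatorial/geometric picture is pinned down.
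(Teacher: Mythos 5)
Your first step matches the paper exactly: the post--singular set is bounded because $f_\lambda^n(\lambda)\to 0$, the map is criniferous, so Theorem~\ref{T:Benini} gives a periodic hair $\gamma$ landing at the parabolic point $0$, and Lemmas~\ref{L:inH} and~\ref{L:realbounded} make $\gamma\cup\{0\}\cup(-\gamma)$ and its $2\pi\mathbb{Z}$--translates into horizontally bounded barriers that confine every Fatou component to a vertical strip $\{-N<\operatorname{Re}(z)<N\}$. Up to that point you are on the paper's track.

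The gap is in your second step, and it is not merely the technical obstacle you flag at the end --- the construction you propose cannot be carried out. Every curve at your disposal ($\gamma$, $-\gamma$, and all their $2\pi$--translates) escapes to infinity vertically inside a strip; horizontal boundedness tells you these curves do not drift sideways, but it gives you no \emph{horizontal} arcs in $J(f_\lambda)$ with which to cap off a region from above and below. Finitely many translates of vertically unbounded hairs partition the plane into vertically unbounded strips, never into bounded Jordan domains, so there is no bounded $D$ with $\partial D\subset J(f_\lambda)$ enclosing the petals obtainable this way, and likewise the curves do not ``tile $\mathbb{C}$ into bounded pieces'' for the preperiodic case. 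What is missing is a dynamical argument for vertical boundedness, and this is exactly what the paper supplies: if $z_0=x_0+iy_0$ lies in a Fatou component trapped in the strip $\{|\operatorname{Re}(z)|<N\}$ and $|y_0|$ is large, then $|f_\lambda(z_0)|\ge\sinh|y_0|\gg N$, and since the entire periodic cycle stays inside the strip the image must again have huge imaginary part; iterating, one gets $|y_n|>\sqrt{c_1^{2^n}-1}\cdot N$, so $z_0\in I(f_\lambda)\subseteq J(f_\lambda)$, a contradiction. That forces every periodic component to be bounded, and the preperiodic components are then handled using horizontal boundedness together with the absence of asymptotic values, not by a tiling into bounded cells. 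Without some version of this escape estimate your outline cannot close.
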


\begin{proof}
	Since $ f_{\lambda}^{n}(\lambda) \rightarrow 0 $ then the post--critical
	set of $ f_{\lambda} $ is bounded and considering that this function is
	criniferous and the origin is a parabolic point, there exists, by the result
	of Benini and Rempe--Gillen, a periodic hair $ \gamma $ landing at zero. So,
	$ \gamma \cup \lbrace 0 \rbrace \cup (- \gamma) $ together with all their
	translations by entire multiples of $ 2 \pi $ divide the complex plane into
	infinitely many strips $ B_{k}, \quad k \in \mathbb{Z} $, where each of the
	$ B_{k} $'s is horizontally bounded by Lemma \ref{L:realbounded}.\\
	
	Let $ G_{0} $ be a periodic component of the Fatou set associated to the
	parabolic point $ 0 $ such that $ G_{0} $ is unbounded. Because 
	$ G_{0} \subset B_{0} \cup B_{-1} $ (the only strips $ B_{i} $ such that 
	$ \gamma \subset \partial B_{i} $), then $ G_{0} $ must be vertically
	unbounded. Furthermore, the same can be said for all the parabolic periodic
	cycle and thus, all of them are contained in the set
	
	\[
		B=\lbrace z \in \mathbb{C} \vert -N < Re(z) < N \rbrace,
	\]
	for some $ N \in \mathbb{N} $ and we can suppose that $ N > 3 $.\\
	
	Let $ z_{0} = x_{0} + i y_{0} \in G_{0} $ such that $ \vert y_{0} \vert $
	is large enough to satisfy $ \sinh(\vert y_{0} \vert) > c_{1} N $ for some
	constant $ c_{1} > 10 $, for example.\\
	
	Thus $ \vert f(z_{0})\vert=\vert \sin (z_{0})\vert>\sinh\vert y_{0}\vert>c_{1} N $,
	it implies that $ f(z_{0}) $ is in the exterior of the circle with center
	at the origin and radius $ c_{1} N $, but inside $ B $. It implies that the
	absolute value of the imaginary part of $ z_{1} = f(z_{0}) = x_{1}+ i y_{1} $
	must be greater than $ \sqrt{(c_{1} N)^{2} - N^{2}} $, i.e.
	\[
	\vert y_{1} \vert > \sqrt{c_{1}^{2}-1} \cdot N.
	\]
	
	Now, we can estimate the modulus of $ z_{2} = f(z_{1}) = x_{2}+ i y_{2} $ as
	follows
	\begin{align*}
		\vert f(z_{1})\vert&=\vert \sin (z_{1})\vert>\sinh\vert y_{1}\vert>\sinh(\sqrt{c_{1}^{2}-1} \cdot N)\\
		&= \sinh\left( \frac{2\sqrt{c_{1}^{2}-1}}{2} N \right)\\
		&= 2 \sinh\left( \frac{\sqrt{c_{1}^{2}-1}}{2} N \right) \cosh\left( \frac{\sqrt{c_{1}^{2}-1}}{2} N \right).
	\end{align*}
	
	Therefore, $ z_{2} $ is in the exterior of the circle with center at the origin
	and radius
	\[
		R=2 \sinh\left( \frac{\sqrt{c_{1}^{2}-1}}{2} N \right) \cosh\left( \frac{\sqrt{c_{1}^{2}-1}}{2} N \right),
	\]
	but inside $ B $.\\
	
Thus, we can estimate the absolute value of the imaginary part of $z_{2}$ in the following way:
	\begin{align*}
		\vert y_{2} \vert &> \sqrt{2^{2} \sinh^{2}\left( \frac{\sqrt{c_{1}^{2}-1}}{2} N \right) \cosh^{2}\left( \frac{\sqrt{c_{1}^{2}-1}}{2} N \right)- N^{2}}\\
		&> \sqrt{2^{2} N^{2} \cosh^{2}\left( \frac{\sqrt{c_{1}^{2}-1}}{2} N \right)- N^{2}} & &\text{$(\sinh(x)>x)$}\\
		&=\left( \sqrt{2^{2} \cosh^{2}\left( \frac{\sqrt{c_{1}^{2}-1}}{2} N \right)- 1}\right)\cdot N.
	\end{align*}
	Observe that the last expression is again of the form $ \sqrt{c_{2}^{2}-1}\cdot N $. 
	Putting
	$ \alpha = \sqrt{c_{1}^{2}-1} \cdot N $ we can compare $ c_{1} $ with $ c_{2} $,
	this is,
	\begin{align*}
		c_{2} &= 2 \cosh\left( \frac{\alpha}{2} \right) > 2\left( \frac{e^{\frac{\alpha}{2}}}{2} \right)\\
		&> 1 + \frac{\alpha}{2} + \frac{\alpha^{2}}{8} > 1 + \frac{\alpha^{2}}{8}\\
		&=1+\frac{(c_{1}^{2}-1)N^{2}}{8}>c_{1}^{2} & &\text{$(N>3)$}.		 
	\end{align*}
	Therefore, $ c_{2}>c_{1}^{2} $ and so $ \vert y_{2}\vert > \sqrt{c_{1}^{4}-1} \cdot N $.
	The process can be continued indefinitely in such a way that if
	$ z_{n}=f_{\lambda}^{n}(z_{0})= x_{n} + i y_{n} $, then
	$ \vert y_{n} \vert > \sqrt{c_{1}^{2^{n}}-1} \cdot N $, so 
	$ \vert z _{n} \vert \rightarrow \infty $ when $ n \rightarrow \infty $. This 
	implies that $ z_{0} \in I(f_{\lambda})\subseteq J(f_{\lambda}) $ contradicting
	the choice of $ z_{0} $ in a Fatou component of $ f_{\lambda} $.
	So, all the periodic components of Fatou in this case are bounded and, because
	there is no asymptotic values for this family of functions and all the components
	are horizontally bounded then all of them are bounded. 
\end{proof}

\section{Repelling Case} \label{S:three}

We will use the same arguments developed in Section \ref{S:three} to prove

\begin{theorem}[Repelling Case] \label{T:repelling1}
	If $ \vert \lambda \vert>1 $ and $ \mathcal{P}(f_{ \lambda }) $ is bounded, then 
	all the components of $ F(f_{ \lambda }) $ are bounded. 

\end{theorem}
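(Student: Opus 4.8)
The plan is to imitate the proof of Theorem~\ref{T:parabolic1} almost verbatim, the only structural change being that the fixed point $0$ is now repelling instead of parabolic. Since $|\lambda|>1$ we have $f_\lambda'(0)=\lambda$, so $0$ is a repelling fixed point; $f_\lambda$ is criniferous for every $\lambda\in\mathbb{C}^{*}$ (as noted in Section~\ref{S:two}) and $\mathcal{P}(f_\lambda)$ is bounded by hypothesis. Hence Theorem~\ref{T:Benini} produces a periodic hair $\gamma$ landing at $0$. By Lemmas~\ref{L:inH} and~\ref{L:realbounded} the curve $\gamma$ eventually stays in one half--plane and is horizontally bounded, so $\gamma\cup\{0\}\cup(-\gamma)$ together with all its translates by integer multiples of $2\pi$ divides the plane into horizontally bounded strips $B_k$, $k\in\mathbb{Z}$. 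Since these dividing curves consist of points of $I(f_\lambda)\subseteq J(f_\lambda)$ and of the preimages $2\pi k$ of the repelling fixed point $0$, every component of $F(f_\lambda)$ is contained in a single strip $B_k$ and hence is horizontally bounded.

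Next I would bound the periodic components. A periodic cycle of Fatou components consists of finitely many components, so it is contained in a finite union of the strips $B_k$ and therefore in a vertical band $\{\,z:\ |\mathrm{Re}(z)|<N\,\}$ for some integer $N>3$. If the cycle were unbounded it would then be vertically unbounded, so we could pick $z_0=x_0+iy_0$ in one of its components with $\sinh|y_0|$ as large as we like, and since the whole forward orbit of $z_0$ stays in $\{|\mathrm{Re}|<N\}$ the estimate carried out in the proof of Theorem~\ref{T:parabolic1} applies verbatim --- here $|f_\lambda(z)|=|\lambda|\,|\sin z|\ge|\lambda|\sinh|\mathrm{Im}(z)|\ge\sinh|\mathrm{Im}(z)|$, so $|\lambda|>1$ only helps --- giving $|\mathrm{Im}(f_\lambda^{n}(z_0))|>\sqrt{c_1^{2^{n}}-1}\cdot N$ and hence $z_0\in I(f_\lambda)\subseteq J(f_\lambda)$, a contradiction. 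So every periodic cycle of Fatou components is vertically bounded, and being horizontally bounded it is bounded.

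It remains to handle an arbitrary component $V$ of $F(f_\lambda)$. Since wandering domains do not occur, $f_\lambda^{k}(V)$ is contained in a component $V'$ of a periodic cycle for some $k\ge0$, and $\overline{V'}$ is bounded, say $\overline{V'}\subseteq\overline{D(0,R)}$. The elementary inequality $|\sin z|\ge\sinh|\mathrm{Im}(z)|$ gives
\[
	f_\lambda^{-1}\bigl(\overline{D(0,R)}\bigr)=\{\,z:\ |\sin z|\le R/|\lambda|\,\}\subseteq\{\,z:\ |\mathrm{Im}(z)|\le\operatorname{arcsinh}(R/|\lambda|)\,\},
\]
so the full preimage of a bounded set is vertically bounded. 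Since $f_\lambda^{\,k-1}(V)$ is contained in one Fatou component (hence in one strip $B_j$, hence horizontally bounded) and also in $f_\lambda^{-1}\bigl(\overline{D(0,R)}\bigr)$, it is bounded; pulling back one step at a time, a backward induction on $j=k,k-1,\dots,0$ shows that every $f_\lambda^{\,j}(V)$ is bounded, in particular $V$ itself. This finishes the proof.

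I expect the only delicate point to be the adaptation of the growth estimate to a periodic cycle that is ``far'' from $0$: one has to confirm that, although such a cycle may sit inside a strip $B_k$ with $k\neq0$, it is still confined to a single vertical band $\{|\mathrm{Re}(z)|<N\}$, which is exactly where the finiteness of the cycle and the horizontal boundedness of the strips (Lemma~\ref{L:realbounded}) are used. The rest is either identical to the parabolic case or a routine consequence of $|\sin z|\ge\sinh|\mathrm{Im}(z)|$.
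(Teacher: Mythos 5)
Your proposal is correct and follows essentially the same route as the paper: Benini--Rempe--Gillen gives a periodic hair landing at the repelling fixed point $0$, Lemma~\ref{L:realbounded} makes the resulting strips horizontally bounded, and the parabolic-case growth estimate (which only improves when $\vert\lambda\vert>1$) rules out unbounded periodic components. The only difference is cosmetic: for the strictly pre-periodic components you spell out the pullback via $\vert\sin z\vert\geq\sinh\vert\mathrm{Im}(z)\vert$, whereas the paper invokes the absence of finite asymptotic values, but both yield the same conclusion.
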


\begin{proof}
	The post--singular set of $ f_{\lambda} $ is bounded by hypothesis, thus the result of 
	Benini and Rempe--Gillen guarantees that $ 0 $ is the landing point of at least
	one periodic hair $ \gamma $ which is horizontally bounded by Lemma \ref{L:realbounded}. 
	So, $ \gamma \cup \lbrace 0 \rbrace \cup( -\gamma) $ together with all its 
	$ 2 \pi \mathbb{Z} $--translates divide the complex plane
	into strips $ B_{k} $. If a periodic component $ U $ of the Fatou set of $ f_{\lambda} $
	is unbounded, it must be completely contained in some of these strips and thus $ U $
	is horizontally bounded too. Therefore, the complete periodic cycle, being a finite
	union of horizontally bounded sets must be contained in the set
	\[
	\lbrace z \in \mathbb{C} \vert -N<Re(z)<N \rbrace \qquad \emph{for some} \quad N \in \mathbb{N}.
	\] 
	Using the same arguments that in the parabolic case and the fact that, in this
	case, $ \vert \lambda \sin(z) \vert > \vert \sin(z) \vert $ we have that all the
	components of any periodic cycle must be bounded. Finally, $ f_{\lambda} $
	does not have asymptotic values and the components are horizontally bounded, hence the 
	strictly pre--periodic components are bounded too.
\end{proof}

\section{The Extended Sine Family} \label{S:four}

In this section we will use the methods used in the proofs of the cases parabolic and
repelling for the sine family in order to show that the components of the Fatou set
of the functions in the extended family:
\[
	\fla(z)=\lambda \sin(z)+a, \qquad \lambda \in \mathbb{C}^{*}, \quad a \in \mathbb{C}
\]
are also bounded in the case $ \vert \lambda \vert > 1 $ with $ \mathcal{P}(\fla) $
bounded, supposing that $ \fla $ has not a Fatou component containing all the 
critical points.\\ 

Observe that Lemma \ref{L:inH} remains valid in the
case of the extended family because the image under $ \fla $ of any parallel line to the
real axis is an ellipse: a bounded set. Thus, a periodic hair (which converges uniformly
to infinity) of $ \fla $ landing at a periodic point can not intersect without bound
any of these parallel horizontal lines. This implies that $ \gamma(t) $
is eventually contained in any upper (w.l.o.g.) half--plane 
$ \mathbb{H}^{b}=\lbrace z \in \mathbb{C} \vert Im(z)>b>0 \rbrace $.\\

\emph{Afirmation.} The Lemma \ref{L:realbounded} is also true in the case of the extended sine family, where
$ \gamma $ is a periodic hair of $ \fla $ landing at a repelling or parabolic 
periodic point $ z_{0} $.

\begin{proof}
	Let $ L_{0}=\lbrace z \in \mathbb{C} \vert Im(z)=Im(\fla(z_{0})) \rbrace $. We only
	need to observe that, although the components of the inverse image of 
	$ L_{0} $ do not intersect $ L_{0} $ necessarily, $ \gamma $ is eventually
	contained in an upper half--plane $ \mathbb{H}^{*} $ whose boundary intersect
	all these components, see Figure \ref{fig4}. Hence exists $ t_{0} \in \mathbb{R}^{+} $
	that satisfies the following properties, where $ \Gamma = f_{\lambda}\circ\gamma$:
	\begin{enumerate}
		\item $ \vert \Gamma(t)- \fla (z_{0})\vert > 2 \pi \qquad \forall t \geq t_{0} $
		($ \Gamma $ converges uniformly to $ \infty $);
		\item $ \Gamma(t_{0}) \in L_{0} $;
		\item $ \gamma(t) \in \mathbb{H}^{*} \qquad \forall t \geq t_{0} $ (w.l.o.g., by
		Lemma \ref{L:inH} applied to the extended family).
	\end{enumerate}
	
\begin{figure}
	\begin{center}
		\includegraphics[width=0.8\linewidth]{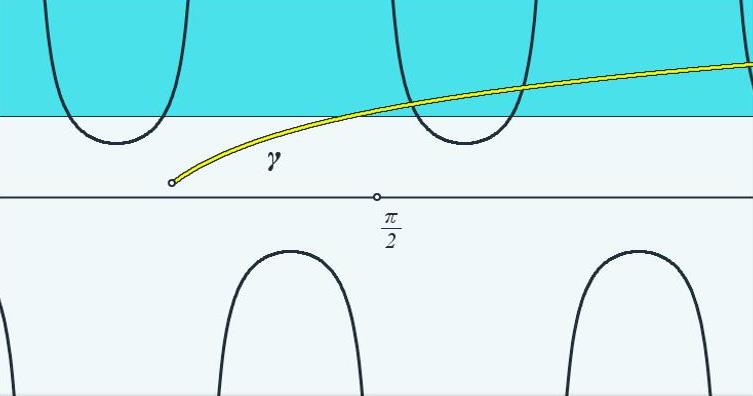}
	\end{center}
	\caption{ $ \gamma $ is eventually contained in the half--plane $ \mathbb{H}^{*} $.}
	\label{fig4}
\end{figure}	
	
	Hence, there exists $ t'_{0} $ and $ t'_{1} $ with $ t_{0}<t'_{0}<t'_{1} $ such
	that $ \lbrace \Gamma(t'_{0}), \Gamma(t'_{1}) \rbrace \subset L_{0} $ (by 
	Property 2) and	for all $ t \in (t'_{0},t'_{1}) $ we have that $ \Gamma(t) $ is
	in the intersection of the upper (w.l.o.g.) half--plane determined by $ L_{0} $
	and the exterior of the disk with center at $ \fla(z_{0}) $ and radius $ 2 \pi $
	(by Property 1), see Figure \ref{fig11}.\\
	
	As in the proof of Lemma \ref{L:realbounded}, at least one translate of $ \Gamma $
	must intersect $ \Gamma $ and the contradiction follows in the same way. 
\end{proof}

\begin{figure}
	\begin{center}
		\includegraphics[width=0.8\linewidth]{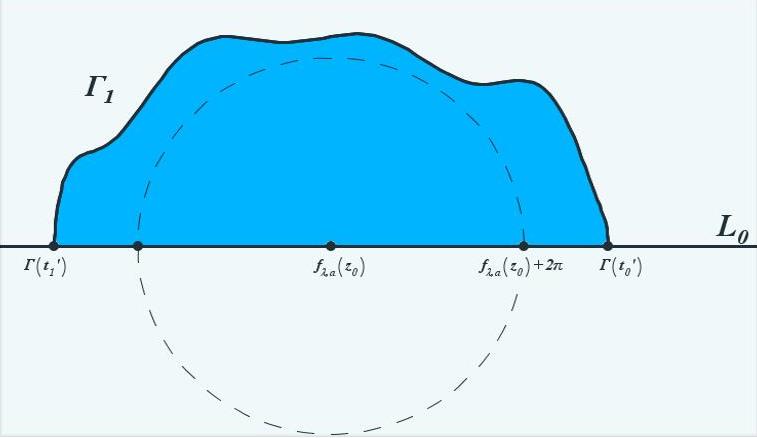}
	\end{center}
	\caption{ $ \Gamma $ and the line $ L_{0} $.}
	\label{fig11}
\end{figure}	

Furthermore, although the central symmetry in the origin has been lost, it is yet true
that $ \frac{\pi}{2}+ z \in F(\fla)$ if and only if 
$ \frac{\pi}{2}- z \in F(\fla)$.\\

The functions in the extended family are also contained in the Speiser class $\mathscr{S}$
and they are of finite order of growth,
so they are criniferous and the result of Benini and Rempe--Gillen
can be used in this context.

\begin{lemma}\label{L:allbounded}
	Let $ W_{1}, \ldots W_{m} $ a cycle of periodic components of the Fatou set of
	$ \fla $ with $ \vert \lambda \vert \geq 1 $ such that 
	$ \cup _{i=1} ^{m} W_{i} $ is horizontally bounded, then $ W_{i} $ is bounded
	for all $ i \in \lbrace 1, \ldots ,m \rbrace $.
\end{lemma}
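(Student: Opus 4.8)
The statement generalizes the closing argument used in the proofs of the parabolic and repelling cases: there, once one knows a periodic cycle of Fatou components is confined to a vertical strip $\{-N<\operatorname{Re}(z)<N\}$, one iterates and shows the modulus of the imaginary part grows doubly-exponentially, forcing any point into $I(f_\lambda)\subseteq J(f_\lambda)$. I would carry out exactly this for $\fla$. First I would fix $N$ with $\bigcup_{i=1}^m W_i\subset\{|\operatorname{Re}(z)|<N\}$, enlarging $N$ so that $N>3$ (or whatever absolute lower bound the estimate needs), and also so that $|a|<N$, say — this is harmless since $N$ may be taken as large as we like. Then I would take a point $z_0=x_0+iy_0$ in some $W_i$ with $|y_0|$ large enough that $\sinh|y_0|>c_1N$ for a fixed constant $c_1$ (e.g. $c_1>10$), which is possible precisely because $W_i$ is horizontally bounded but, being a component of a cycle confined to a strip and hence necessarily unbounded if it is unbounded, is vertically unbounded; if every $W_i$ is already bounded there is nothing to prove.

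The core estimate is the recursion on $|y_n|$ where $z_n=\fla^n(z_0)=x_n+iy_n$. Since $|\sin z|\ge\sinh|\operatorname{Im}z|$, we get $|\fla(z_n)|\ge|\lambda|\sinh|y_n|-|a|\ge\sinh|y_n|-|a|$, and because $z_{n+1}$ lies in the strip $\{|\operatorname{Re}|<N\}$, its imaginary part satisfies $|y_{n+1}|>\sqrt{(\sinh|y_n|-|a|)^2-N^2}$. Absorbing the additive constants $|a|$ and using $|\lambda|\ge1$ only weakens the bound by a bounded amount, so after possibly enlarging the threshold on $|y_0|$ the same doubling-of-the-exponent phenomenon from the parabolic proof goes through: writing the bound in the form $|y_n|>\sqrt{c_n^2-1}\cdot N$ one checks $c_{n+1}>c_n^2$ via the inequality $2\cosh(\alpha/2)>e^{\alpha/2}>1+\alpha/2+\alpha^2/8$ exactly as before, with the $(N>3)$-step unchanged; hence $c_n\ge c_1^{2^{n-1}}$ and $|z_n|\to\infty$. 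That places $z_0\in I(\fla)\subseteq J(\fla)$, contradicting $z_0\in W_i\subset F(\fla)$, so each $W_i$ is bounded.

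**Main obstacle.** The only genuinely new wrinkle compared with the $a=0$ cases is the affine shift by $a$ and the factor $\lambda$ with $|\lambda|$ merely $\ge1$ rather than $>1$. Neither destroys the estimate: $|a|$ is a fixed constant that is dominated once $|y_n|$ is large (and $|y_1|$ is already enormous compared to any fixed $|a|$ after the first step), and $|\lambda|\ge1$ means $|\fla(z_n)|\ge\sinh|y_n|-|a|$, which is all that is used. So the argument is a routine adaptation; the part requiring a little care is making the bookkeeping of the additive constant $|a|$ and the "enlarge $N$ and the threshold on $|y_0|$" clauses precise so that the clean recursion $c_{n+1}>c_n^2$ still holds from $n=1$ onward. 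I would state at the outset that $N$ is chosen with $N>\max\{3,|a|\}$ and that $|y_0|$ is chosen so large that $\sinh|y_0|-|a|>c_1N$, and then reproduce the parabolic-case computation verbatim.

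\begin{proof}
If every $W_i$ is bounded there is nothing to prove, so suppose some $W_{i}$ in the cycle is unbounded. Since $\bigcup_{i=1}^{m}W_i$ is horizontally bounded we may fix $N\in\mathbb{N}$ with $\bigcup_{i=1}^{m}W_i\subset\{z\in\mathbb{C}\mid -N<\operatorname{Re}(z)<N\}$, and we may enlarge $N$ so that $N>\max\{3,|a|\}$. An unbounded component contained in this strip is necessarily vertically unbounded, so we may pick $z_{0}=x_{0}+iy_{0}\in W_{i}$ with $|y_{0}|$ so large that
\[
	\sinh|y_{0}|-|a| > c_{1}N, \qquad c_{1}>10 .
\]
Write $z_{n}=\fla^{n}(z_{0})=x_{n}+iy_{n}$. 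Using $|\sin z|\ge\sinh|\operatorname{Im}(z)|$, the hypothesis $|\lambda|\ge 1$, and $|a|<N$, we obtain
\[
	\vert z_{1}\vert = \vert \lambda\sin(z_{0})+a\vert \ge \sinh|y_{0}|-|a| > c_{1}N ,
\]
so $z_{1}$ lies outside the disk of radius $c_{1}N$ about the origin but inside the strip, whence
\[
	\vert y_{1}\vert > \sqrt{(c_{1}N)^{2}-N^{2}} = \sqrt{c_{1}^{2}-1}\cdot N .
\]
Now proceed exactly as in the proof of Theorem \ref{T:parabolic1}. Inductively, if $\vert y_{n}\vert>\sqrt{c_{n}^{2}-1}\cdot N$ with $c_{n}\ge c_{1}$, then since $\sinh(x)>x$ and, again, $|a|<N$,
\[
	\vert z_{n+1}\vert \ge \sinh\bigl(\sqrt{c_{n}^{2}-1}\cdot N\bigr)-|a| > 2\sinh\!\left(\tfrac{\sqrt{c_{n}^{2}-1}}{2}N\right)\cosh\!\left(\tfrac{\sqrt{c_{n}^{2}-1}}{2}N\right)-N ,
\]
and the same manipulation as in the parabolic case gives $\vert y_{n+1}\vert>\sqrt{c_{n+1}^{2}-1}\cdot N$ with
\[
	c_{n+1} = 2\cosh\!\left(\tfrac{\sqrt{c_{n}^{2}-1}}{2}N\right) > e^{\frac{\sqrt{c_{n}^{2}-1}}{2}N} > 1+\frac{(c_{n}^{2}-1)N^{2}}{8} > c_{n}^{2},
\]
the last inequality because $N>3$. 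Hence $c_{n}\ge c_{1}^{2^{n-1}}$ and $\vert y_{n}\vert\to\infty$, so $\vert z_{n}\vert\to\infty$; that is, $z_{0}\in I(\fla)\subseteq J(\fla)$, contradicting $z_{0}\in W_{i}\subset F(\fla)$. Therefore every $W_{i}$ is bounded.
\end{proof}
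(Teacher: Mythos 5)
Your proof is correct, and the overall strategy is the one the paper itself uses: confine the cycle to a vertical strip $\lbrace \vert \mathrm{Re}(z)\vert <N\rbrace$, use $\vert \sin z\vert \ge \sinh\vert \mathrm{Im}(z)\vert$ to show the imaginary parts of the orbit grow doubly exponentially, and conclude that an unbounded component would contain a point of $I(\fla)\subseteq J(\fla)$. Where you differ is in the bookkeeping for the translation constant $a$: you absorb $\vert a\vert$ into $N$ (choosing $N>\max\lbrace 3,\vert a\vert\rbrace$) and rerun the parabolic-case recursion $c_{n+1}>c_n^2$ directly on the $\fla$--orbit, whereas the paper sets up a reference sequence $y_n$ coming from the pure sine map $\lambda\sin z$ and proves by induction that the imaginary parts $v_n$ of the $\fla$--orbit dominate it, $\vert v_n\vert>\vert y_n\vert+\vert a\vert$, splitting into the cases $\vert a\vert>1$ and $\vert a\vert<1$. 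Your direct route is a bit cleaner: it avoids the case split on $\vert a\vert$ and the need to carry an auxiliary comparison orbit, at the cost of having to re-verify that the subtracted constant $-\vert a\vert>-N$ does not break the step from $\vert z_{n+1}\vert$ to $\vert y_{n+1}\vert>\sqrt{c_{n+1}^2-1}\cdot N$; that verification is routine (since $\sinh\beta\gg N$ for $\beta=\tfrac{\sqrt{c_n^2-1}}{2}N$ and $c_n>10$) but is the one place where ``the same manipulation as in the parabolic case'' deserves a line of explicit justification.
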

 
\begin{proof}
	Suppose that the periodic cycle is contained in the strip 
	$ B=\lbrace z \in \mathbb{C} \vert -N < Re(z) < N \rbrace $ for some 
	$ N \in \mathbb{N} $. Observe that for the usual sine function 
	$ f_{\lambda} (z) = \lambda \sin (z) $ we can take $ y_{0} $ large enough as in the
	proof of Theorem (Parabolic Case) in such a way that
	we know that the imaginary part of 
	$ f_{\lambda} ^{n} (z_{0}) = x_{n} + i y_{n} $ goes to infinity when 
	$ n \rightarrow \infty $.\\
	
	Note that the imaginary part of $ z_{1} $ is greater than 
	$ \sinh \vert y_{0} \vert - \epsilon $ where $ \epsilon $ is as small as we
	want choosing $ y_{0} $ large enough; we simply use that 
	$ \epsilon < \vert a \vert $, see Figure \ref{fig5}. Additionally, we require 
	$ y_{0} $
	satisfying $ \sinh \vert y_{0} \vert >3 \vert a \vert $, 
	$ \cosh \vert y_{0} \vert > 4 $ and 
	$ \vert \lambda \vert e^{-y_{0}} < \vert a \vert $. Similar inequalities holds
	for all $ y_{n} $ because the sequence 
	$ ( \vert y_{n} \vert )_{n \in \mathbb{N}} $ is
	strictly increasing. Now we show that for a map in the extended sine family 
	$ \fla(z)= \lambda \sin(z)+a $ we can choose a point $ w_{0} $ in the vertically
	unbounded cycle of Fatou components of $ \fla $ such that the absolute values
	of the imaginary parts
	of its iterates under $ \fla $ is greater than the corresponding absolute values
	of the sequence $ y_{n} $.\\

\begin{figure}
	\begin{center}
		\includegraphics[width=0.8\linewidth]{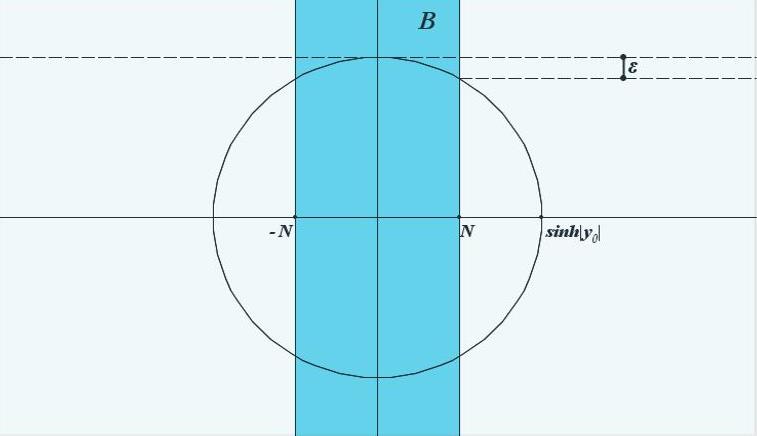}
	\end{center}
	\caption{ Meaning of $ \epsilon $.}
	\label{fig5}
\end{figure}	
	
	Choose $ w_{0} = u_{0} + i v_{0} $ such that 
	$ \vert v_{0} \vert > \vert y_{0} \vert + \vert a \vert $ and consider
	$ w_{n} = u_{n} + i v_{n} = \fla^{n} (w_{0}) $. We are going to prove by induction
	that $ \vert v_{n} \vert > \vert y_{n} \vert + \vert a \vert $.\\
	
	\begin{itemize}
		\item The case $ n = 0 $ is true by the choice of $ w_{0} $.
		\item Suppose that $ \vert v_{n} \vert > \vert y_{n} \vert + \vert a \vert $,
		then
		\[
			\vert y_{n+1} \vert < \vert z_{n+1} \vert < \vert \lambda \vert \cosh \vert  y_{n} \vert
		\]
		and
		\begin{align*}
			\vert v_{n+1} \vert &> \vert \lambda \vert \sinh \vert  v_{n} \vert - \epsilon -\vert a \vert \\
			&> \vert \lambda \vert \sinh (\vert  y_{n} \vert + \vert a \vert )- \epsilon -\vert a \vert \\
			&> \vert \lambda \vert (\sinh \vert  y_{n} \vert \cosh \vert a \vert + \cosh \vert y_{n} \vert \sinh \vert a \vert ) - \epsilon -\vert a \vert
		\end{align*}
		and thus, if $ \vert a \vert > 1 $
		\begin{align*}
			\vert v_{n+1} \vert-\vert y_{n+1} \vert &> \vert \lambda \vert \sinh \vert  y_{n} \vert \cosh \vert a \vert +  \vert \lambda \vert \cosh \vert y_{n} \vert \sinh \vert a \vert  - \epsilon -\vert a \vert - \vert \lambda \vert \cosh \vert y_{n} \vert \\
			&> \vert \lambda \vert \sinh \vert  y_{n} \vert \cosh \vert a \vert - \epsilon -\vert a \vert \\
			&> \sinh \vert  y_{n} \vert  - \epsilon -\vert a \vert \\
			&> 3 \vert a \vert - \vert a \vert - \vert a \vert = \vert a \vert. 
		\end{align*}
		In the case $ \vert a \vert < 1 $
		\begin{align*}
			\vert v_{n+1} \vert-\vert y_{n+1} \vert &> \vert \lambda \vert \sinh \vert  y_{n} \vert \cosh \vert a \vert +  \vert \lambda \vert \cosh \vert y_{n} \vert \sinh \vert a \vert  - \epsilon -\vert a \vert - \vert \lambda \vert \cosh \vert y_{n} \vert \\
			&> \vert \lambda \vert (\sinh \vert  y_{n} \vert -\cosh \vert y_{n} \vert)+ \vert \lambda \vert \cosh \vert y_{n} \vert \sinh \vert a \vert - \epsilon -\vert a \vert \\
			&> -\vert \lambda \vert e^{-\vert y_{0} \vert} + \vert \lambda \vert \cosh \vert  y_{n} \vert \vert a \vert  - \epsilon -\vert a \vert \\
			&> 4 \vert a \vert - 3 \vert a \vert  = \vert a \vert. \quad Done. 
		\end{align*}
	\end{itemize}
	So, if $ W_{i} $ is an unbounded (vertically) component of the Fatou set of
	$ \fla $, then $ w_{n} \in \cup _{i=1}^{m} W_{i} $ goes to infinity when 
	$ n \rightarrow \infty $. Hence $ w_{n} \in I( \fla ) \subseteq J( \fla ) $
	contradicting that $ w_{n} \in F( \fla ) $.\\
	
	Therefore, all the components of the Fatou set in the periodic cycle are bounded.
\end{proof}

In the case $ f_{\lambda}(z)= \lambda \sin (z) $ we have an evident fixed point
at $ 0 $ with multiplier $ \lambda $. For the maps $ \fla $ we have problems to find the exact localization of the fixed points and their respective multipliers. However,
because the map $ G(z)= \lambda \sin (z)- z $ is entire without asymptotic 
singularities we have that the equation 

\[
	\lambda \sin (z)- z = -a
\]
has an infinity of solutions (fixed points of $ \fla $) for all $ a \in \mathbb{C} $.\\

Furthermore, since $ \fla'(z)= \lambda \cos (z) $ and 
$ \fla (z)- a = \lambda \sin (z) $, we have that 

\[
	(\fla'(z))^{2}+(\fla(z)-a)^{2}=\lambda^{2}.
\]

So, if $ z_{0} $ is a fixed point of $ \fla $, then

\begin{equation} \label{E:multiplier}
	\fla'(z_{0})=\sqrt{\lambda^{2} - (z_{0} - a)^{2}}.
\end{equation}

\begin{definition}
	Let $ A $ be a connected subset of $ \mathbb{C} $. We say that $ A $ \textbf{has a 
	$ T^{+}- $end} if 
	$ A\cap\lbrace z \in \mathbb{C} \vert Im(z)>M \rbrace \neq \varnothing $ for all
	$ M \in \mathbb{N} $ but there exists $ M_{0} \in \mathbb{N} $ such that
	$ A\cap\lbrace z \in \mathbb{C} \vert Im(z)<-M_{0} \rbrace = \varnothing $. 
	$ A $ \textbf{has a	$ T^{-}- $end} if $ -A $ has a $ T^{+}- $end.  
\end{definition}

\begin{theorem}[Repelling Case in Extended Family]\label{T:extended}
	Let $ \fla(z)=\lambda \sin (z) + a $ with $ \vert \lambda \vert  > 1 $ a
	post--singularly bounded function, then all the components of the
	Fatou set of $ \fla $ are bounded if and only if for each of them exists a critical
	point that is not in the component.
\end{theorem}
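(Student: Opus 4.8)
The forward implication is immediate: the critical set of $\fla$ is $\{\frac{\pi}{2}+k\pi:k\in\mathbb{Z}\}$, an unbounded set, so a bounded Fatou component contains only finitely many critical points and in particular omits one. All the content therefore lies in the reverse implication, which I would prove in contrapositive form: \emph{if $\fla$ has an unbounded Fatou component, then some Fatou component contains every critical point.}

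Suppose $U$ is an unbounded Fatou component. Since $\fla\in\mathscr{S}$ there are no wandering domains, and since $\fla$ has no asymptotic values the preimage of any bounded set has only bounded components (a bounded constraint forces $|\mathrm{Im}\,z|$ bounded, and $2\pi$-periodicity then displays the preimage as a disjoint union of bounded pieces, and this persists under iteration). Hence, were every periodic Fatou component bounded, every pre-periodic one would be too; so I may take $U=U_{1}$ in a periodic cycle $U_{1},\dots,U_{m}$. By Lemma \ref{L:allbounded}, if $\bigcup_{i}U_{i}$ were horizontally bounded then all $U_{i}$ would be bounded; so some $U_{j}$ is horizontally unbounded. (It is also vertically unbounded: bounded imaginary part would make $\fla(U_{j})$ lie in a bounded disk, and pulling back along the cycle, using again the absence of asymptotic values, would force $U_{j}$ bounded.) It remains to show that $U_{j}$ contains all critical points.

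For this I would partition $\mathbb{C}$ into horizontally bounded strips, in the spirit of Theorems \ref{T:parabolic1} and \ref{T:repelling1}. By the remark following (\ref{E:multiplier}) the multiplier $\sqrt{\lambda^{2}-(z_{0}-a)^{2}}$ of a fixed point $z_{0}$ tends to $\infty$ as $z_{0}\to\infty$, so $\fla$ has infinitely many repelling fixed points; fix one, $z_{0}$. As $\fla$ is criniferous and post-singularly bounded, Theorem \ref{T:Benini} provides a periodic hair $\gamma$ landing at $z_{0}$; by the Afirmation $\gamma$ is horizontally bounded, and by Lemma \ref{L:inH} in the extended setting it is eventually in one half-plane, say the upper one. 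The translates $\gamma+2\pi k$ are again such hairs, while the reflections $\sigma(\gamma)+2\pi k$ --- where $\sigma(z)=\pi-z$, so $\fla\circ\sigma=\fla$ and hence $\sigma\big(I(\fla)\big)=I(\fla)\subseteq J(\fla)$ --- are curves in $J(\fla)$ landing at the preimages $\pi-z_{0}+2\pi k$ of $z_{0}$, horizontally bounded and running off to the lower half-plane. Since all these curves lie in $J(\fla)$, a Fatou component lies in one complementary region. The plan is to show that these regions are horizontally bounded away from a ``corridor'' around the real axis, and that a horizontally unbounded Fatou component must thread that corridor across every vertical strip; then, using the $2\pi$-periodicity, it meets every strip and ultimately contains points of $\fla^{-1}\big(\{\lambda+a,-\lambda+a\}\big)$ in every strip. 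As $\fla^{-1}\big(\{\lambda+a,-\lambda+a\}\big)$ is exactly $\{\frac{\pi}{2}+k\pi\}$ (there $\sin z=\pm1$), this would make $U_{j}$ contain all critical points, contradicting the hypothesis. Once all periodic cycles are bounded, the no-asymptotic-values argument makes the pre-periodic components bounded as well.

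The main obstacle is exactly this chopping-and-threading step. The up-going hairs $\gamma+2\pi k$ only separate the upper half-plane and the reflected curves $\sigma(\gamma)+2\pi k$ only the lower half-plane, so controlling the band around the real axis is where one must use both the freedom to choose $z_{0}$ among infinitely many repelling fixed points and, decisively, the hypothesis that no Fatou component contains all critical points, to preclude an unbounded component slipping horizontally past every dividing curve. Two technical points support this: that a hair restricted to a sufficiently high half-plane is eventually a single unbounded arc (so the strips really are strips), and that the reflected curves are disjoint from the hairs.
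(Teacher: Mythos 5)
Your forward implication and your overall scaffolding (periodic hairs landing at repelling points, horizontal boundedness of hairs via the Afirmation, Lemma \ref{L:allbounded} to dispose of horizontally bounded cycles) agree with the paper. But the decisive step is precisely the one you defer: you must show that a horizontally unbounded periodic component actually \emph{contains} the critical points $\frac{\pi}{2}+k\pi$, not merely that it threads a corridor around the real axis passing near them, and you explicitly leave this ``chopping-and-threading step'' as ``the main obstacle'' without an argument. Nothing in your setup prevents a component from weaving between the up-going hairs and the down-going reflected curves while avoiding every point of $\frac{\pi}{2}+\pi\mathbb{Z}$. The paper closes exactly this gap with a symmetry argument you set up but do not exploit: since $\fla\circ\sigma=\fla$ for $\sigma(z)=\pi-z$, the reflection $U^{*}=\sigma(U)$ of an unbounded component $U$ is again a Fatou component. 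If $\frac{\pi}{2}\notin U$, then either $U=U^{*}$, in which case $U$ surrounds $\frac{\pi}{2}$ and is multiply connected --- impossible, because multiply connected Fatou components of entire functions are bounded wandering domains and $\fla\in\mathscr{S}$ has none --- or $U\neq U^{*}$, in which case a periodic hair separating $U$ from $U^{*}$ would have to be horizontally unbounded, contradicting Lemma \ref{L:realbounded} in its extended form. That dichotomy, run inside a case analysis on whether a Fatou component can disconnect the periodic points whose hairs have $T^{+}$-ends from those whose hairs have $T^{-}$-ends, is the actual content of the proof; your proposal stops just short of it.

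A secondary flaw: your claim that, because $\fla$ has no asymptotic values, ``the preimage of any bounded set has only bounded components,'' with $2\pi$-periodicity displaying the preimage as a disjoint union of bounded pieces, is false as stated. The set $\sin^{-1}\bigl(\overline{D(0,2)}\bigr)$ contains the entire real axis in a single unbounded component, because the disk contains both critical values; periodicity of a preimage does not make it disconnected. Consequently your early reduction ``were every periodic Fatou component bounded, every pre-periodic one would be too'' is not justified at that point in the argument. The correct order, and the one the paper follows, is to first establish that \emph{every} Fatou component is horizontally bounded using the dividing curves, then bound the periodic cycles by Lemma \ref{L:allbounded}, and only then conclude that preimages are bounded, since a vertically unbounded set confined to a vertical strip cannot map into a bounded set under $\fla$.
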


\begin{proof}
	If there exists a component that contains all the critical points then, clearly, this
	component is unbounded, so the part ''only if'' is proved.\\
	
	On the other hand, using the result of Benini and Rempe, we have that each periodic
	repelling and parabolic point of 
	$ \fla $ is the landing point of a periodic hair. Suppose that two periodic
	hairs $ \gamma_{1} $ and $ \gamma_{2} $ land in a periodic point $ z_{0} $ of
	this type such that $ \gamma_{1} $ has a $ T^{+}- $end and
	$ \gamma_{2} $ has a $ T^{-}- $end. By Lemma \ref{L:realbounded} (using $ z_{0} $
	as landing point), $ \gamma_{1} $ and $ \gamma_{2} $ are horizontally bounded 
	and thus, $ \gamma_{1} \cup \lbrace z_{0} \rbrace \cup \gamma_{2}  $ together
	with all its $ 2 \pi \mathbb{Z} $--translates divide the plane in such a way
	that any component of the Fatou set of $ \fla $ must be horizontally bounded.
	Therefore, by Lemma \ref{L:allbounded} all the components of any periodic cycle
	of $ \fla $ are bounded and, because $ \fla $ has not asymptotic values, all the
	Fatou components of the map are bounded.\\
	
	The same occurs if there is a point $ z_{0} \in J(\fla) $ such that for every
	neighbourhood $ V $ of $ z_{0} $ there are $ z_{1} $ and $ z_{2} $ repelling
	periodic points in $ V $ with hairs $ \gamma_{1} $ and $ \gamma_{2} $ landing
	in them, respectively, and satisfying that $ \gamma_{1} $ has a $ T^{+}-end $
	and $ \gamma_{2} $ has a $ T^{-}-end $.\\
	
	The last possibility happens when there exists a Fatou component $ U $ of $ \fla $ that
	produces a disconnection between the set of all repelling or parabolic periodic
	points such that all their landing hairs have $ T^{+}-ends $ and the set of
	all repelling or parabolic points all of whose hairs have $ T^{-}-ends $.\\
	
	We have two cases:\\
	
	\begin{itemize}
		\item The real axis is contained in $ U $. In this case $ U $ contains all
		the critical points of the map contradicting that $ U $ must avoid at least one
		of them.
		\item There exists a point $ z_{0}=x_{0}+0i $ in the Julia set of $ \fla $ such
		that in some neighbourhood of $ z_{0} $ there are only repelling or parabolic
		periodic points whose landing hairs have (w.l.o.g.) $ T^{+}-ends $. We can
		choose $ z_{0} $ such that 
		$ -\frac{\pi}{2} < x_{0} <\frac{\pi}{2} $ due to $ 2 \pi $ periodicity and
		central symmetry of $ \fla $ in $ \frac{\pi}{2} $.\\
		
		Again, by central symmetry in $ \frac{\pi}{2} $, the point 
		$ z_{1} = \pi- x_{0} $ has a neighbourhood where every repelling or parabolic
		periodic points has all their landing hairs having $ T^{-}-ends $. Hence,
		by $ 2 \pi $--periodicity, $ U $ must slide above and below the real axis
		all the way.\\
		
		Moreover, $ U $ must contain both critical points $ \frac{\pi}{2} $ and
		$ -\frac{\pi}{2} $, otherwise, we can assume (w.l.o.g.) that
		$ \frac{\pi}{2} \notin U $ and we can consider the symmetrical Fatou domain $ U^{*} $
		of $ U $ with respect to $ \frac{\pi}{2} $. Now, if $ U=U^{*} $ then, since
		$ \frac{\pi}{2} \notin U $, $ U $ is infinitely connected, which is impossible.
		If $ U \neq U^{*} $ then there exists a periodic hair $ \Gamma $ between
		them, but in this case $ \Gamma $ would be horizontally unbounded 
		contradicting Lemma \ref{L:realbounded}. Hence 
		$ \lbrace \frac{\pi}{2},-\frac{\pi}{2} \rbrace \subset U $ and, by 
		periodicity, all the critical points are in $ U $. Therefore this case is also
		impossible.
		
	\end{itemize}
	
	In conclusion, all the components of the map $ \fla $  must be bounded.\\
		 
\end{proof}
		
\begin{theorem}[Disjoint Type]
	Suppose that $ U $ is an unbounded component of the Fatou set of $ \fla $, where
	$ \vert \lambda \vert > 1 $ and $ \mathcal{P}(\fla) $ is bounded. If $ U $ is forward 
	invariant then $ \fla $ is of disjoint type.
\end{theorem}
	
\begin{proof}
	By the proof of the previous Theorem we know that, since $ U $ is unbounded, it must
	contain all the critical points of $ \fla $. Also, $ U $ must contain the
	critical values because $ U $ is forward invariant. Hence, as $ U $ contains
	all the inverse images of any critical value, $ U $ is completely invariant.\\
	
	The classification of periodic components of the Fatou set gives three possible
	cases:
	
	\begin{itemize}
		\item $ U $ is a Siegel disc. This case is impossible because it is clear
		that $ \fla $ is not univalent in $ U $.
		\item $ U $ is a parabolic domain. In this case the multiplier of the fixed
		parabolic point $ z_{0} $ is equal to 1, so, by the equation (\ref{E:multiplier}):

		\begin{align*}
			1 &= \lambda^{2} - (z_{0}-a)^{2}\\
			z_{0} &= \sqrt{\lambda^{2}-1}+a
		\end{align*}
		
		Because $ \lambda \sin (z_{0})+a=z_{0}= \sqrt{\lambda^{2}-1}+a $, we have that
		$ \lambda \sin (z_{0}) = \sqrt{\lambda^{2}-1} $, furthermore, 
		$ \fla'(z_{0}) =1 $ implies that $ \lambda \cos (z_{0}) = 1 $. So,
		
		\[
			\frac{i}{\lambda}\sqrt{\lambda^{2}-1}+\frac{1}{\lambda}= i \sin(z_{0})+ \cos(z_{0})= e^{i z_{0}}
		\]
		
		It means that $ e^{i z_{0}} $ does not depend of $ a $, and thus 
		$ a=2 \pi k_{0} $ for some $ k_{0} \in \mathbb{Z} $. We remark that, in this
		case, $ 2 \pi k_{0} $ is a repelling fixed point of the map 
		$ f_{\lambda,2 \pi k_{0}} $, which implies that there exists a periodic hair
		$ \gamma $ landing at $ 2 \pi k_{0} $.\\
		
		Now, the map $ f_{\lambda,2 \pi k_{0}} $ has a central symmetry with respect
		to $ 2 \pi k_{0} $ as the following calculation shows:\\
		
		The points $ 2 \pi k_{0} + z $ and $ 2 \pi k_{0} - z $ are symmetrical with
		respect to $ 2 \pi k_{0} $. Applying the map to each of them we have
		
		\begin{align*}
			f_{\lambda,2 \pi k_{0}}( 2 \pi k_{0} + z ) &= \lambda \sin (2 \pi k_{0} + z)+ 2 \pi k_{0}\\
			&= \lambda \sin (z) + 2 \pi k_{0}
		\end{align*}
		
		and
		
		\begin{align*}
			f_{\lambda,2 \pi k_{0}}( 2 \pi k_{0} - z ) &= \lambda \sin (2 \pi k_{0} - z)+ 2 \pi k_{0}\\
			&= \lambda \sin (-z) + 2 \pi k_{0}\\
			&= -\lambda \sin (z) +  2 \pi k_{0}
		\end{align*}
		
		Hence, the images under $ f_{\lambda,2 \pi k_{0}} $ of two 
		$ 2 \pi k_{0} $--central symmetric points are $ 2 \pi k_{0} $--central symmetric
		points and thus, if $ \gamma_{1} $ is the curve
		
		\[
			\gamma_{1}=\lbrace 2 \pi k_{0}-z \in \mathbb{C} \vert 2 \pi k_{0}+z \in \gamma \rbrace
		\]
		
		then $ \gamma \cup \lbrace 2 \pi k_{0} \rbrace \cup \gamma_{1} \subset J(f_{\lambda,2 \pi k_{0}}) $ divide the plane contradicting that $ U $ is horizontally unbounded.
		\item $ U $ is an attractive domain. Since $ U $ is completely invariant,
		$ U=F(\fla) $ contains all the critical points and critical values of $ \fla $ 
		(Theorem 6 in \cite{EL1992}) which implies that $ z_{0} $ is the unique attracting 				fixed point. Therefore, $ \fla $ is of disjoint type.
	\end{itemize}	   
	
\end{proof}

We conjecture that, if $ \fla $ is post--critically bounded, $ \vert \lambda \vert >1$
and $ U $ is an unbounded component of the Fatou set of $ \fla $ then $ \fla $ is of
disjoint type, this is $ U $ is completely invariant.\\

Finally, we show that it is possible to find examples of maps $ \fla $ of disjoint type
with $ \vert \lambda \vert > 1 $. Consider the map $ \fla $ with $ \lambda=-1.003 $
and $ a=\frac{\pi}{2}-1.003 $. This map satisfies that 
$ \fla([-\frac{\pi}{2},\frac{\pi}{2}])\subset [-\frac{\pi}{2},\frac{\pi}{2}] $ and
$ \fla(-\frac{\pi}{2})=\frac{\pi}{2} $, so all the critical points converge to the same attracting fixed point $ z_{0}\approx 0.28541 $ 
and the map is of disjoint type, see Figure \ref{fig6}.\\

\begin{figure}
	\begin{center}
		\includegraphics[width=0.8\linewidth]{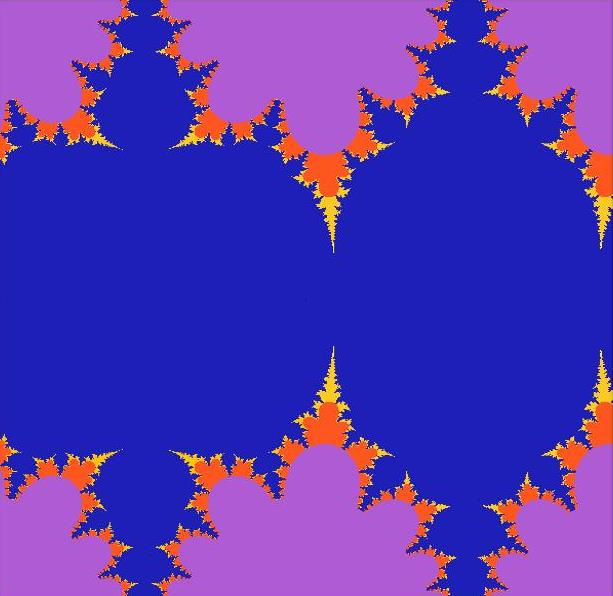}
	\end{center}
	\caption{ Dynamical plane for the map $ f_{-1.003,\frac{\pi}{2}-1.003} $.}
	\label{fig6}
\end{figure}

In this example, the parameters are real numbers, this allows to show easily that the 
map is of disjoint type, however, being a structurally stable map, there are a lot of
examples of disjoint type maps $ \fla $ with $ \vert \lambda \vert > 1 $ and 
complex parameters, see Figures \ref{fig7}, \ref{fig8} and \ref{fig9}.\\

\begin{figure}
	\begin{center}
		\includegraphics[width=0.5\linewidth]{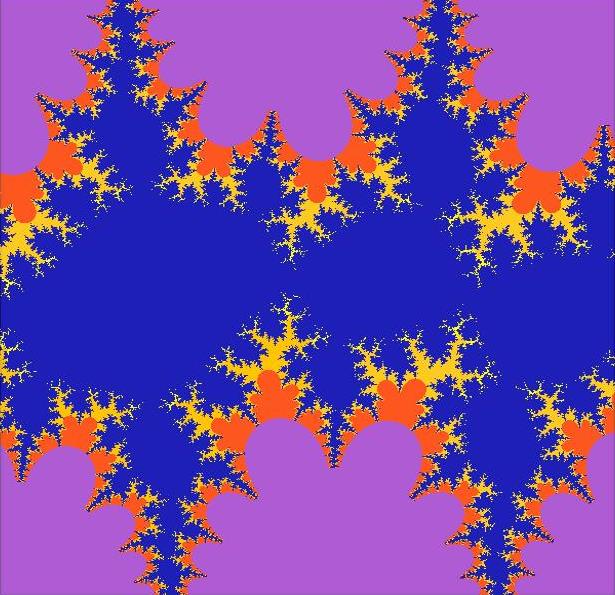}
	\end{center}
	\caption{ Dynamical plane for the map $ f_{1.2 e^{\frac{83}{90} \pi i},1.4+0.25i} $.}
	\label{fig7}
\end{figure} 

\begin{figure}
	\begin{center}
		\includegraphics[width=0.5\linewidth]{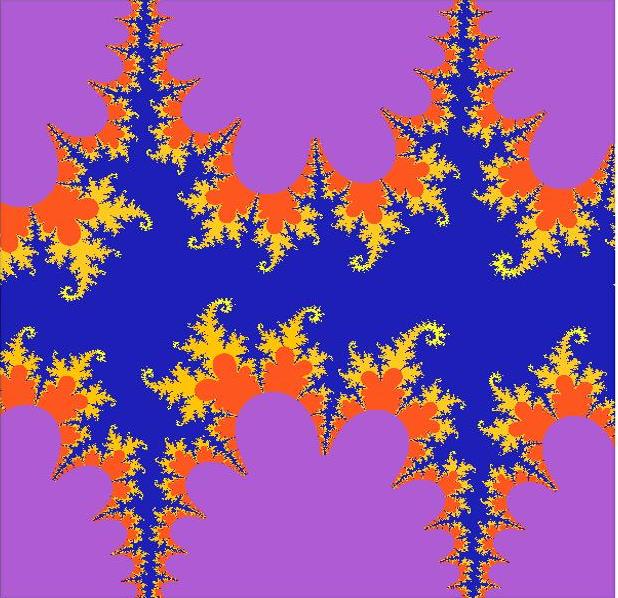}
	\end{center}
	\caption{ Dynamical plane for the map $ f_{1.505 e^{\frac{89}{90} \pi i},2+0.12i} $.}
	\label{fig8}
\end{figure}

\begin{figure}
	\begin{center}
		\includegraphics[width=0.5\linewidth]{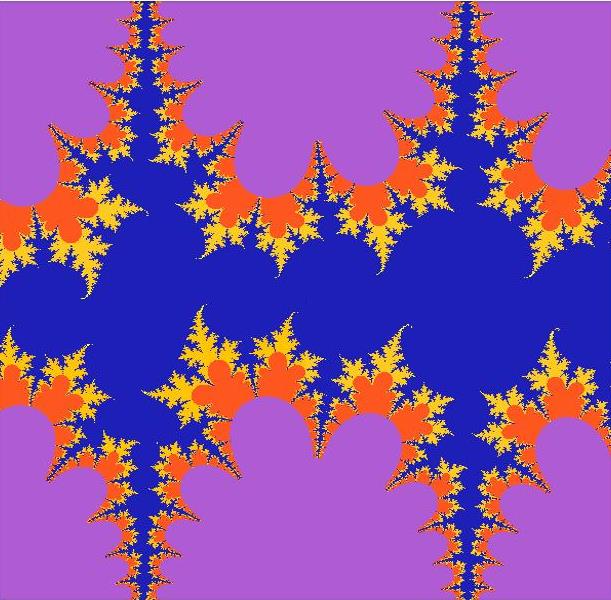}
	\end{center}
	\caption{ Dynamical plane for the map $ f_{-1.5,2+0.08i} $.}
	\label{fig9}
\end{figure}

\end{document}